\documentclass{article}
\def\firstpage{1}            

\setcounter{page}{\firstpage}

\usepackage{style_arxiv}
\baselineskip 14.5pt



\usepackage{amsthm}
\theoremstyle{plain}
\newtheorem{theorem}{Theorem}[section]
\newtheorem{corollary}[theorem]{Corollary}
\newtheorem{lemma}[theorem]{Lemma}
\newtheorem{defination}[theorem]{Definition}
\newtheorem{remark}[theorem]{Remark}
\newtheorem{proposition}[theorem]{Proposition}

\theoremstyle{definition}

\usepackage{titlesec}
\renewcommand\thesection{\arabic{section}}

\usepackage{indentfirst}
\setlength{\parindent}{1em}

\title{\Large \bf \boldmath\ \\ Convergence of  the  extended Kalman filter with small and state-dependent noise}   

\author{\large Ibrahim Mbouandi Njiasse$^{1}$, Florent Ouabo Kamkumo$^{1}$,    Ralf Wunderlich$^{1,\ast}$} 

\date{}



\usepackage{graphicx}
\usepackage{bm}
\graphicspath{{./}}
\usepackage[utf8]{inputenc}
\usepackage{dsfont}
\usepackage{subcaption}

\usepackage{color}
\usepackage[hyphens]{url}
\usepackage{enumerate} 
\usepackage{booktabs}
\usepackage{tabularx}
\usepackage{ltablex}
\usepackage{mathptmx}
\usepackage{amssymb,amsmath,latexsym} 
\usepackage{mathtools}
\usepackage{longtable} 
\usepackage{textcomp}
\usepackage{lscape}
\usepackage{bbm}
\usepackage{eurosym}
\usepackage[font=footnotesize,labelfont=bf]{caption}

\usepackage[usenames,dvipsnames,svgnames]{xcolor}

\usepackage{listings}

\usepackage{pdfpages}
\usepackage[pdftex]{hyperref}

\mathtoolsset{showonlyrefs=true}
\usepackage{scalerel}

\usepackage{import}
\usepackage{multicol} 
\usepackage{nomencl}

\usepackage[nice]{nicefrac} 

\usepackage{framed} 

\usepackage{ifthen}
\usepackage{calrsfs}
\DeclareMathAlphabet{\pazocal}{OMS}{zplm}{m}{n}
\let\mathcal\pazocal

\usepackage[section]{placeins}

\usepackage{amsmath,blkarray}

\usepackage{ae,aecompl}
\usepackage{graphicx}
\usepackage{palatino}
\usepackage[T1]{fontenc}
\usepackage{rotating}
\usepackage{epsf}
\usepackage{setspace}

\usepackage[utf8]{inputenc}
\usepackage[fulladjust]{marginnote}
\usepackage[]{todonotes} 
\usepackage[acronym,toc]{glossaries}	
\usepackage{float}

\usepackage{amsfonts}

\usepackage{enumitem}

\definecolor{myred}{rgb}{0.8,0,0}  

\newcommand{\mymarginpar}[1]{ \marginpar{{\tiny #1}}}

\renewcommand{\mymarginpar}[1]{}

\usepackage{soul}
\makeatletter
\makeatother

\def \Prob{\mathbb{P}}
\def \E{\mathbb{E}}
\def \R{\mathbb{R}}               
\def \N{\mathbb{N}}               

\def \geqm{\succeq}
\def \leqm{\preceq}

\def \gm{\succ}

\def \sig{Y}
\def \obs{Z}
\def \covm{Q^\eps}
\def \qcovm{Q}
\def \mean{M}
\def \coDifS{g}
\def \coDifO{\ell}
\def \x{\mathcal{X}}
\def \K{\mathcal{K}}

\def \a{\eta}



\definecolor{mygreen}{rgb}{0.0,0.5,0}

\newcommand{\eps}{\varepsilon}

\renewcommand{\P}{\Prob}

\newcommand{\condmean}{M}  
 
\newcommand{\Var}{\mathrm{Var}}

\newcommand{\af}{ {\Phi}}

\allowdisplaybreaks

\def \trace{\operatorname{tr}}
\def \det{\operatorname{det}}
\def \rank{\operatorname{rank}}
\def \nullspace{\operatorname{nullspace}}

\begin{document}

\maketitle

\thispagestyle{first}
\renewcommand{\thefootnote}{\fnsymbol{footnote}}

\footnotetext{\hspace*{-5mm} \begin{tabular}{@{}r@{}p{13.4cm}@{}}
& 
\\ 
$^{\ast}$ & Corresponding author\\
\end{tabular}}


\begin{center} \it
$^{1}$Brandenburg University of Technology Cottbus-Senftenberg, Institute of Mathematics, \\P.O. Box 101344, 03013 Cottbus, Germany\\

\vskip 4.5mm

Email: Ibrahim.MbouandiNjiasse@b-tu.de, ~ Florent.OuaboKamkumo@b-tu.de, ~ralf.wunderlich@b-tu.de \\

\end{center}

	\begin{abstract}
		Nonlinear filtering problems are encountered in many applications,  and one solution approach is the extended Kalman filter, which is not always convergent. Therefore, it is crucial to identify conditions under which the extended Kalman filter provides accurate approximations. This paper generalizes two significant results  of Picard (1991) on the efficiency of the continuous-time extended Kalman filter for a filtering system with small noise, to a more general setting where the observation noise may be  state-dependent  but does not allow signal reconstruction from the quadratic variation of the observation process as  for example in epidemic models. First, we show that if the drift of the signal process and the observation process becomes nearly linear  when the parameter $\epsilon$, which scales the diffusion coefficients, approaches zero, and the drift coefficient of the observation process is strongly injective,  then the estimation error is of the order of $\sqrt{\epsilon}$. We then establish conditions under which the impact of the initial filtering error decays exponentially fast.

\vskip 4.5mm
\nd \begin{tabular}{@{}l@{ }p{10.1cm}} {\bf Keywords } &
Nonlinear filtering, small noise, state-dependent noise, extended Kalman filter, error estimate
\end{tabular}

\vskip 4.5mm

\nd  \begin{tabular}{@{}l@{ }p{10.1cm}}{\bf 2020 Mathematics Subject Classification} ~&
60G35, 62M05, 62M15, 93Ell
\end{tabular}
\end{abstract}

\baselineskip 4.5mm

	\section{Introduction}
	
	This research investigates a nonlinear filtering problem in the presence of state-dependent noise.   Many physical phenomena can best be described by nonlinear stochastic differential equations (SDEs), in which not only the drift coefficient but also the diffusion coefficient depends on the state of the system.
	These characteristics are essential for accurately modeling and understanding such systems. Filtering problems associated with these phenomena are nonstandard and necessitate alternative methods to the standard Kalman filtering approach. For instance, estimating the angular procession of a rotating spacecraft requires precise nonlinear filtering techniques to account for the state-dependent variations in noise. Similarly, the design of phase-locked loops, which are essential in communication systems to maintain signal synchronization, also requires these advanced filtering methods \cite{1099828}. Another application from mathematical epidemiology is considered below   in Section \ref{sec:epidemic_example}. These examples highlight the practical significance and wide-ranging applications of our study. 
	
	The general setting of the stochastic filtering problem can be expressed in terms of the following coupled system of SDEs on $[0,\infty)$
	\begin{align} \label{FiltPb}	
		\begin{split}
			d \sig (t) & =  f(t,\sig(t),\mathcal{\obs}(t) )dt + \sqrt{\varepsilon}\sigma(t,\sig(t),\mathcal{\obs}(t) )dW^{(1)}(t) + \sqrt{\varepsilon} \coDifS (t,\sig(t),\mathcal{\obs}(t) )dW^{(2)}(t), \\[.5ex]
			d\obs(t) & =  h(t,\sig(t),\mathcal{\obs}(t) )dt + \sqrt{\varepsilon}\coDifO(t,\sig(t),\mathcal{\obs}(t) )dW^{(2)}(t),
		\end{split}
	\end{align}
	\noindent with initial values $(\sig(0),\obs(0))=(y_0,z_0)$.
	Here,  $\sig(t) \in \R^n$ denotes the  hidden signal process   and $\obs(t) \in \R^d $ the  observation  process at time $t\geq 0$. Further,  $\mathcal{\obs}(t) =\{ \obs_s, s \leq t \}$   denotes  the  path of  the observation  process up to time   $t$.  Finally,  $\varepsilon>0 $ is a small  constant that scales the diffusion coefficients, and  $W^{(i)},~ i=1,2$, are two independent standard  Brownian motions in $\R^{d_1}$and $\R^{d_2}$ respectively (with $n,d,d_1,d_2$ being positive integers).

	This model is quite  similar to the  model in  \cite{picard1991efficiency}. However,    we address a more general setting by allowing  the diffusion coefficient  of the observation process to be non-constant and state-dependent. For such a model, when the quadratic variation of the observation process is informative and can be computed in  practice, it can be possible to   reconstruct the signal under some injectivity conditions.  However, this reconstruction is often not possible, as in the case of epidemic models, see Section~\ref{sec:epidemic_example},  for which the injectivity conditions are not fulfilled. 	It therefore remains an open question, which we investigate in this paper, whether the results of \cite{picard1991efficiency} on the efficiency of the extended Kalman filter can be transferred to the  generalized setting in \eqref{FiltPb}.

	\paragraph{Literature review}  
	Filtering problems with state-dependent diffusion coefficients in the SDE of the observation process 
	are for instance studied in   \cite{mclane1969optimal}. The author  proposed an  optimal linear filter  by  restricting the filter structure  to  the linear class  with  respect to  the observation.   Further references are \cite{takeuchi1981least, takeuchi1985least}, which provided  a more accurate approximation for the least-square optimal  estimate  as well as the convergence property of that  estimate. More recently,  \cite{joannides1995nonlinear, joannides1997nonlinear} have considered nonlinear filtering with perfect observation  and noninformative quadratic variation, which helps design robust and efficient algorithms in the case of filtering with small observation noise.  As mentioned above, the case  of state-dependent and small noise for Markovian diffusion has been investigated by \cite{picard1993estimation} and he  suggested a  stochastic algorithm  for the approximation of the optimal estimate. 
	
	On the other hand, several authors have considered the nonlinear filtering problem with small noise. Most of them  are  interested in finding efficient asymptotic  approximate filters, see \cite{katzur1984asymptotic1, katzur1984asymptotic2}.  \cite{picard1991efficiency} proved some convergence results on  the extended Kalman filter for filtering problems with constant and small diffusion coefficient in the observation process dynamic. More recently, \cite{kutoyants2025extended} considered such a filtering problem with small observation noise, and designed an adaptive extended Kalman filter algorithm that estimates both an unknown parameter and the unobservable state.

	\paragraph{Our contribution}
	The main contributions of this paper are the generalization of two important results on error estimates of the extended Kalman filter from \cite{picard1991efficiency}.  Under certain suitable assumptions, we first show that the normalized filter error for the generalized model \eqref{FiltPb} is of the order of $\sqrt{\eps}$. Second,  we show that the filtering error is bounded over time, and the impact of the initial error on the filtering error vanishes exponentially fast.

	\paragraph{ Paper outline}  In Section 2, we introduce a motivating example  from mathematical epidemiology, then Section 3 describes the derivation of the extended Kalman filter for \eqref{FiltPb}, and presents some relevant definitions related to dynamical systems. Section 4 focuses on  the estimation error of the extended Kalman filter, and we prove that under some assumptions, the filtering error is of order $\sqrt{\eps}$. Finally, in Section 5, we show that the filtering error is bounded, and the initial error is forgotten exponentially fast when certain detectability conditions are met. The Appendix collects auxiliary results needed for the proofs of our theorems.
	
	\section{Motivating example from mathematical epidemiology}\label{sec:epidemic_example}

    \begin{figure}[htbp]
    \centering   
    \includegraphics[width=4in]{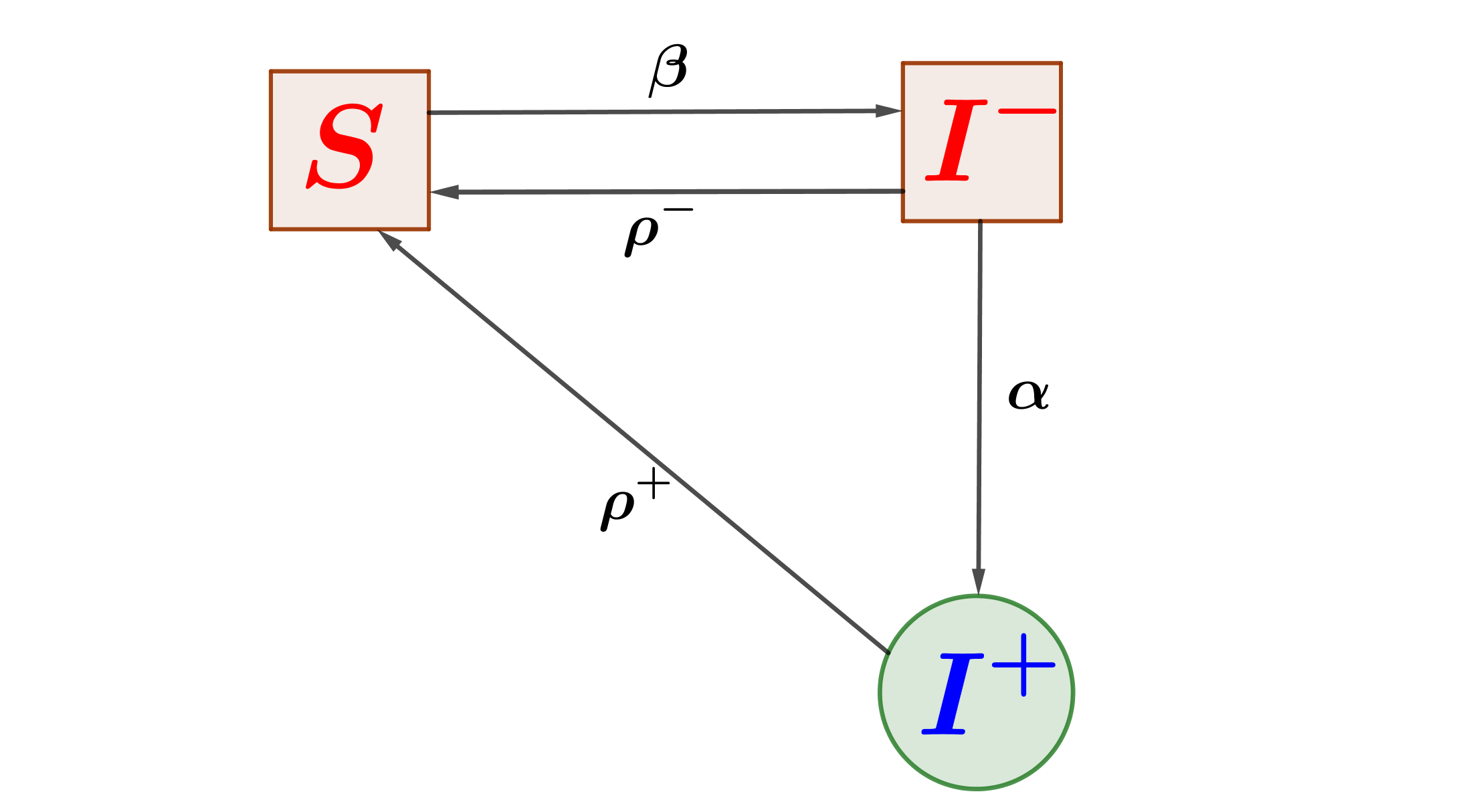}
    \caption{Flowchart of the $SI^{\pm}S$ compartmental model. }			\label{figcompartmtSIpm}
\end{figure}

	In this section, we introduce a filtering system from mathematical epidemiology that is related to stochastic epidemic models with partial information and can be approximated  by the filter system specified in \eqref{FiltPb}.  Let   us consider the $SI^\pm S$ epidemic, modeling the spread of an infectious disease in which a population is divided into three compartments according to the health state of individuals:  susceptible ($S$), undetected infected ($I^-$), and detected infected ($I^+$).  The associated flowchart is  depicted in Figure \ref{figcompartmtSIpm}.		
	Individuals that are susceptible to the disease form the compartment $S$. By contact to infectious individuals they become infected and transition with rate $\beta>0$ to the compartment $I^-$ of undetected infected individuals. Here,   presymptomatic or asymptomatic individuals and those who are not aware about their infection are collected. All of them are considered infectious and are not in quarantine. 
	The health authorities test the population for the infectious disease with a test rate $\alpha>0$. People from group $I^-$ test positive and move to group $I^+$, which consists of confirmed  or detected infected individuals who are in quarantine and can no longer infect susceptible individuals. Both detected  and undetected infected individuals recover from the infection with recovery rates  $\rho^+,\rho^->0$, and return to the susceptible compartment $S$.  A more detailed description of such models can be found in \cite{kamkumo2025estimating,kamkumo2025stochastic,ouabo2025phd}, \cite{njiasse2025stoch,mbouandi2025phd}.
	
	We denote the number of individuals in the three compartments at time $t\ge0$ by $S(t),I^-(t),I^+(t)$. The total population size  is assumed to be constant and denoted by $N$. Then for all $t\ge 0$ it holds $S(t) + I^-(t) + I^+(t) = N$.
	Since the number of confirmed infected individuals in compartment $I^+$ is known to health authorities due to monitoring through testing, $I^+(t)$ can be treated as an observable variable. However, the size of the compartments $S$ and $I^-$ is not directly observable, as the infection status of their individuals is unknown. Therefore, $S(t)$ and $I^-(t)$ must be treated as hidden or unobservable variables that can only be estimated based on observations of $I^-$.

	The stochastic dynamics of such a compartmental model can be derived using a continuous-time Markov chain approach as described in \cite{BrittonPardoux2019} and in \cite{kamkumo2025stochastic}.    
	For large population sizes $N$, diffusion approximations can be derived for the state vector $X$, which contains the relative subpopulation sizes.  These are stochastic differential equations of the  form				
	\begin{align} \label{dynmxbar}
		dX(t)&=\overline{F}(X(t))dt~+~\dfrac{1}{\sqrt{N}}\overline{\sigma}(X(t))dW(t),~ ~X(0)=x_0, 
	\end{align}
	where  coefficients $ \overline{F}$, $\overline{\sigma}$  are nonlinear functions and $W$  is a vector of independent standard Brownian motions. 
	
	In the case of the above $SI^\pm S$ epidemic model, we can work with the reduced state $X=(X_1,X_2)^\top = (\overline{I}^-,\overline{I}^+)^\top  = N^{-1}({I}^-,{I}^+)^\top $.  Since we assume a constant population size, the proportion of susceptible individuals $\overline S=S/N$ can be removed from the state, as it can be  derived from   $\overline{S}=1-\overline{I}^+-\overline{I}^-$.
	If the state $X$ is divided into the hidden component $Y= \overline I^-$ and the observable component $Z= \overline I^+$, and the notation
	$Y(t)=y,~ Z(t)=z$ and $\overline S(t)=s$ is used, the expressions of the functions  $\overline{F}$ and $\overline{\sigma}$ are as follows, see \cite{mbouandi2025phd,kamkumo2025stochastic}:		 
	\begin{align} \label{Fbar}
		\overline{F}\left( \begin{array}{l}
			y\\
			z
		\end{array}\right) =	\left(  \begin{array}{l}
			\beta s y- (\alpha+ \rho^-)y\\ [1ex] 
			\alpha y- \rho^+ z 	\end{array} \right),	
		~~
		\overline{\sigma}\left( \begin{array}{l}
			y\\
			z
		\end{array}\right) =  \left(  \begin{array}{c@{\hspace{4mm}}c@{\hspace{4mm}} c @{\hspace{4mm}} c}
			\sqrt{	\beta  s y}& - \sqrt{ \rho^-y} &  - \sqrt{\alpha y}& 0 \\ [1ex] 
			0 &0&	\sqrt{\alpha y} & - \sqrt{\rho^+ z} 	\end{array}		
		\right),
	\end{align}			
	and $W=\left({W_{1}}, \ldots,{W_{4}} \right)^\top$  with independent standard Brownian motions $W_1,\ldots,W_4$. 
	
	The expression $sy$ or, after replacing the variable $s$ with $1-y-z$, the term $(1-y-z)y$ leads to a quadratic nonlinearity in this dynamic, in particular to a nonlinear dependence of the drift coefficient on the signal $y$. This makes the filter problem nonlinear, and in this form it does not meet the technical conditions required for the subsequent analysis. Therefore, we first divide the time interval $[0,T]$ into  $N_t\in\N$ 
	subintervals of length $\Delta t=T/N_t$ and grid points $t_i=i\Delta t, i=0,\ldots,N_t$. 
	Second,  we are going to approximate the  dynamics of the state $X=(Y,Z)^\top$ on the $N_t$ small time intervals $[t_i, t_{i+1} ), ~ i=0, \ldots, N_t-1$, by freezing the variable $s$ to its value at the left endpoint of the interval $s_i=\overline{S}(t_i)$.   This is motivated by the fact that 
	in epidemic models, the proportion of susceptible individuals is usually much larger than the proportions in the other compartments. Therefore, compared to the proportions in the rather ``small'' compartments $I^-$ and $I^+$, it varies much more slowly and can be approximated well by a constant, in contrast to the latter variables, which are of greatest interest in epidemic models.
	
	This yields a filtering system on the interval $[t_i, t_{i+1} ), ~ i=0, \ldots, N_t-1$  that has linear drift coefficients and nonlinear diffusion coefficients. Moreover, this filtering system has  exactly the structure of the system  \eqref{FiltPb}, with  $\eps = {1}/{N}$ and  the following coefficient functions:      
	\begin{align} 
		f(t, y,z) ~=&~	  
		\beta s_i y- (\alpha+ \rho^-)y,                  &h(t, y,z) &=  ~ \alpha y- \rho^+ z,\\ 
		\sigma (t,y,z) ~ =&~ \left(  \begin{array}{cc}
			\sqrt{	\beta  s_i y},& - \sqrt{ \rho^-y} \end{array} \right), 
		&g (t,y,z) & =~ \left( \begin{array}{cc} - \sqrt{\alpha y},& 0
		\end{array}\right), \\ 
		\ell (t,y,z) ~ =&~ \left(  \begin{array}{cc}
			\sqrt{\alpha y}, & - \sqrt{\rho^+ z} 	\end{array} \right),
	\end{align}
	with $W^{(1)}=(W_1,W_2)^\top$  and  $W^{(2)}=(W_3,W_4)^\top$.

	\section{Extended Kalman filter and some general concepts}
	
	\subsection{Extended Kalman filter}
	The Kalman filtering procedure is well known  and has been successfully applied  in many domains.  In its standard form, however, it is necessary that, in the SDEs of the filtering system, the drift coefficients be linear in the signal and the diffusion coefficients be independent of the signal.
	Some attempts have been made to apply the core idea of the Kalman filter to nonlinear models. One of the most prominent attempts is the so-called extended Kalman filter. 
	
	 Let us fix a filtered probability spaces, denoted by  $\left( \Omega, \mathcal{F}, \mathbb{F}, \mathbb{P} \right)$.  The filtration $\mathbb{F}=(\mathcal{F}_t)_{t\ge 0}$ with $\mathcal{F}_s\subset \mathcal{F}_t\subset \mathcal{F}$ for $0\le s<t$, is such that all stochastic processes appearing in throughout this work are adapted with respect to it.
		The extended Kalman filter aims to  approximate the mean-square optimal estimate of the signal given the observations which is known to be the projection of the signal process onto the the filtration $\mathbb{F}^Z \subset \mathbb{F}  $   generated by the observation process $(Z(t))_{t\geq 0}$, i.e., $\mathbb{F}^Z=(\mathcal{F}^Z_t)_{ t \geq 0}$  with the $\sigma$-algebras $\mathcal{F}^Z_t=\sigma\{Z(s), s\le t\}$.  This estimate is given by the the conditional mean $\widehat{\condmean}(t)= \mathbb{E}[ Y(t)\vert \mathcal{F}^Z_t ]$. The  associated estimation error is measured by the conditional covariance matrix $ \Var[ Y(t)|\mathcal{F}^Z_t] =\mathbb{E}[(Y(t)- \widehat{\condmean}(t))(Y(t)-\widehat{\condmean}(t))^\top \vert \mathcal{F}^Z_t ].$      
	This extended approach  was first introduced by \cite{gelb1974applied}, and an empirical justification  was proposed by \cite{pardoux1991filtrage}.  This  approach  is  described in many books, for instance \cite{bain2008fundamentals}  for  more details.

	\begin{remark}
		In the following, many functions will have three variables: time $t$, signal $\sig(t)$, and observation path $\mathcal{\obs}(t)$, as already seen in the coefficients of the SDEs in \eqref{FiltPb}. For the sake of brevity, however, we will omit the dependence on the observation path $\mathcal{\obs}(t)$ in some expressions, but keep this dependence in mind.
	\end{remark}

	Let us revisit the main step of the   derivation of  the  extended  Kalman filter   for the nonlinear  filtering  problem \eqref{FiltPb}. 
	We denote by $\overline{\sig}$ the solution of the ordinary differential equation associated with the SDE for the signal process $Y$ in \eqref{FiltPb}, and obtained by removing the diffusion term, i.e. 
	\begin{equation}
		\dfrac{d \overline{\sig}(t)}{dt}= f(t,\overline{\sig}(t) ), ~~ ~~\overline{\sig}(0)= y_0.
	\end{equation}	
	In  a  small  interval of  time  $[0, \Delta )$, with $\Delta>0$,  we  can perform the  Taylor-like  expansion   of the nonlinear drift coefficients of  the filtering  system  \eqref{FiltPb} around the solution $\overline{\sig}$, and replace in the diffusion coefficients $\sig$ with $\overline{\sig}$.
	Then, we   obtain  the following linearized  filtering system
	
	\quad $
	\begin{array}{r@{~}cc@{~}l}
		d\sig(t) & = &  \left( \nabla_y f(t,\overline{\sig}(t) )(\sig(t)-\overline{\sig}(t))+f(t,\overline{\sig}(t) ) \right) dt &+ \sqrt{\eps}\sigma(t,\overline{\sig}(t) )dW^{(1)}(t)\\ \\
		& &&+ \sqrt{\eps}\coDifS(t,\overline{\sig}(t) )dW^{(2)}(t) ,\\\\
		d\obs(t) & = & \left( \nabla_yh(t,\overline{\sig}(t) )(\sig(t)-\overline{\sig}(t))+h(t,\overline{\sig}(t) ) \right)dt &+ \sqrt{\eps}\coDifO(t,\overline{\sig}(t))dW^{(2)}(t),
	\end{array}
	$\\
	
	\noindent  where  $\nabla_yf$   and  $\nabla_yh$    denote the  gradients  of $f$ and  $h$ with  respect to  the signal argument $\sig$,  respectively. 
	In  a  small  interval of time,  the drift coefficients  of the SDEs in the original system can be assumed to be nearly linear in the signal $\sig$, such that the above linearized SDEs provide a good approximation. This setting is then suitable to  use the results from \cite[ Chapter 12]{LiptserShiryaevVolII2001} on conditionally  Gaussian process.  By applying    it, we    obtain  approximations of    the  conditional  mean $\widehat{\sig} $ and  the  conditional  variance $\covm$ by the corresponding filter processes for  the linearized problem      that   satisfy,  see \cite[Theorem 12.66]{LiptserShiryaevVolII2001} 
	\begin{align}
			\widehat{\sig}(t) ~=\widehat{\sig}(0) &+ \int_{0}^{t}\left( \nabla_y f(s,\overline{\sig}(s) )(\widehat{\sig}(s)-\overline{\sig}(s))+f(s,\overline{\sig}(s) ) \right)ds \nonumber \\
			&+ \int_{0}^{t}G(s)\big[d\obs(s)-(h(s,\widehat{\sig}(s))-\nabla_y h(s,\overline{\sig}(s))\overline{\sig}(s)+ \nabla_yh(s,\overline{\sig}(s))\widehat{\sig}(s))ds\big] ,
		\end{align}
	where \begin{equation}
		G(t)=\big[\eps \coDifS \coDifO^\top (t,\overline{\sig}(t)) + \covm(t) \nabla_yh^\top (t,\overline{\sig}(t))\big]\big(\eps \coDifO \coDifO^\top \big)^{-1} (t,\overline{\sig}(t)),
	\end{equation}
	and  $\covm$ satisfies the Riccati differential equation 
	\begin{align}
		\dot{\covm}(t)~=&- \big[ \eps\coDifS \coDifO^\top + \covm(t) \nabla_yh^\top\big]\big(\eps \coDifO\coDifO^\top\big)^{-1}\big[\eps \coDifS \coDifO^\top + \covm(t) \nabla_y h^\top\big]^\top(t,\overline{\sig}(t))\nonumber \\	
		&+	\nabla_y f(t,\overline{\sig}(t))\covm(t) + \covm(t)\nabla_y f^\top(t,\overline{\sig}(t)) + \eps (\sigma\sigma^\top + \coDifS\coDifS^\top) (t,\overline{\sig}(t)).	
		\label{Ricatti1}
	\end{align}
	Note that this is an ordinary differential equation with random coefficients, as these depend on the observation path $\mathcal{\obs}(t)$, which has been omitted in the notation.
	By this  method, it is possible to  construct a mapping  which  relates   any  observable  process of linearization such as $ \overline{\sig}$ to  the  corresponding conditional mean of  the filter. Then,  the  extended  Kalman  filter by definition is   the  fixed  point  for  this  mapping, see \cite{bain2008fundamentals}. Hence, we  deduce  that  the extended  Kalman  filter for the nonlinear filtering problem \eqref{FiltPb}  is given by the conditional mean  $\mean$  that satisfies	
	\begin{align}
		M(t)~=& M(0) + \int_{0}^{t} f(s,M(s))ds 
		+ \int_{0}^{t}G(s)\big[d\obs(s)-h(s,M(s))ds\big], 
	\end{align}
	where \begin{equation}G(t)=\big[\varepsilon \coDifS \coDifO ^\top(t,M(t)) + \covm(t) \nabla_y h^\top(t,M(t))\big]\big(\varepsilon \coDifO \coDifO^\top\big)^{-1} (t,M(t)),
	\end{equation}
	and the conditional covariance matrix  with dynamics
	\begin{align}
		\dot{\covm}(t)=&~ - \big[ \varepsilon\coDifS \coDifO^\top + \covm(t)\nabla_y h^\top\big]\big(\varepsilon \coDifO\coDifO^\top\big)^{-1}\big[\varepsilon \coDifS \coDifO^\top + \covm(t) \nabla_y h^\top\big]^\top(t,M(t)) \\
		&~  +	\nabla_y f(t,M(t))\covm(t) + \covm(t)\nabla_y f^\top(t,M(t)) + \varepsilon (\sigma\sigma^\top + \coDifS\coDifS^\top) (t,M(t)).
		\label{Ricatti2}
	\end{align}
	
	In summary, we now formally define what will be referred to below as the extended Kalman filter for the filter system \eqref{FiltPb}. For the sake of simpler equations, we introduce the scaled covariance matrix $ \qcovm=\dfrac{1}{\eps} \covm$.
	
	\begin{defination}
		Let $(M(t))_{t \geq 0}$  and  $(\covm(t))_{t \geq 0}$ be  two  observable  processes  with  values respectively   in  $\mathbb{R}^n$ 
		and the set of positive-definite symmetric   matrices of order $n$. A process which  at  any  time  $t$ is Gaussian   with mean $M(t)$  and  covariance matrix $ \covm(t) =\varepsilon \qcovm (t)$ will  be  said  to  be an  extended  Kalman  filter  for \eqref{FiltPb} if   $M(t)$  is  solution of 	
		\begin{equation}\label{EKF1}
			M(t) = M(0) + \int_{0}^{t} f(s,M(s) )ds +
			\int_{0}^{t}G(s)\big[d\obs(s)-h(s,M(s))ds\big], 
		\end{equation}
		where 
		\begin{equation} G(t)=\big[ \coDifS \coDifO^\top (t,M(t)) +\qcovm(t)  \nabla_yh^\top(t,M(t))\big]\big( \coDifO\coDifO^\top\big)^{-1} (t,M(t)), 
			\hspace*{1.5em}	
		\end{equation}	
		\noindent and  $\qcovm(t)$ is solution  of the  Riccati  equation		
		\begin{align} 
			\begin{split}		
				\dot{\qcovm}(t)= & - \qcovm(t)[ \nabla_y h^\top (\coDifO\coDifO^\top)^{-1}  \nabla_yh](t,M(t)) \qcovm(t)
				\\	&
				+ \big[ \nabla_y f - \coDifS \coDifO^\top(\coDifO\coDifO^\top)^{-1} \nabla_y h\big](t,M(t))\qcovm(t) \\
				&+ \qcovm(t)\big[ \nabla_y f - \coDifS \coDifO^\top(\coDifO\coDifO^\top)^{-1}  \nabla_y h\big]^{\top}(t,M(t))  
				+\af(t,M(t)), \label{EKF2}			
			\end{split}	 
		\end{align}
		where  the absolute term $\af$ is given by 
		\begin{equation}
			\af(t,M(t))=\sigma\sigma^\top(t,M(t)) + \coDifS ( I-\coDifO^\top(\coDifO\coDifO^\top)^{-1}\coDifO)\coDifS^\top (t,M(t)).
			\hspace*{5em}		
		\end{equation}		
	\end{defination}

	\subsection{Some definitions}\label{sec_definitions}
	We will now introduce some key concepts from this study and recall  the notation of the underlying filtered probability space $\left( \Omega, \mathcal{F}, \mathbb{F}, \mathbb{P} \right)$. The filtration $\mathbb{F}$ is such that all processes defined in this article are adapted with respect to it.
	We will use the expression a \textit{``family of''} function or process when this function or process depends on $\varepsilon$.  It is assumed that the family of functions $f,~ h,~ \sigma,~ g, ~ \ell$  are measurable and locally bounded.  Throughout the paper,  for a vector $x$, we denote by $\vert x \vert$ the Euclidean norm of $x$, and for a matrix $A$, the matrix norm   $\vert A \vert$ is considered to  be  the supremum of  $ \vert Ax\vert$ over unit vectors $x$.
	
	\begin{defination}[Observable  Process]	
		Let  $\mathbb{F}^Z\subset \mathbb{F} $ be  the observable  filtration  generated by the observation process  $(Z(t))_{t\geq 0}$. We will  call an   \textbf{observable  process} any $\mathbb{F}^Z$-adapted  process.   
	\end{defination}
	From the above definition, it follows that
	any observable process is adapted with respect to  the natural filtration   $\mathbb{F}^Z$.
	
	\begin{defination}
		Let $\varphi:[0,\infty)\times \R^n \to \R^n,$  be  a  family  of   functions and $x_1, x_2 \in \R^n $. 
			\begin{enumerate}
				\item 
				$\varphi$  is said to be  \textbf{almost  linear} if  there  exists  a family  of matrix-valued observable processes $F:[0,\infty)\to \R^{n\times n}$  such  that 				
				\begin{equation}
					\vert \varphi(t,x_1)- \varphi(t,x_2) - F(t) (x_1-x_2) \vert  \leq \mu^{\eps} \vert x_1-x_2 \vert,
				\end{equation}
				for some family of  numbers  $\mu=(\mu^{\eps})_{\eps>0}$ converging to zero as $\eps \to 0$. \\ The  process $F$  will be  called an  \textbf{almost  derivative} of $f$.
				
				\item The  function  $\varphi$  will be  said  to  be \textbf{strongly  injective} if 
				\begin{equation}
					\vert \varphi (t,x_1)- \varphi(t,x_2)  \vert  \geq c \vert x_1-x_2 \vert,
				\end{equation}
				for  some $c>0$.
			\end{enumerate}
		\end{defination}
		
		\begin{defination} Let $L^q$ be the space of random variables with  finite moment of order $~q \geq 1$. 
			The \textbf{norm of a random variable} $X$ in $L^q,~q \geq 1$,  is denoted by  $\left\vert X \right\vert_{q}$  and is  defined  by  $\left\vert X \right\vert_{q}=\Big(\E[|X|^q]\Big)^{1/q}$.
		\end{defination}

		\begin{defination}
			A family of processes $(\x^\eps(t))_{t\ge 0}$ is said to be \textbf{bounded in $L^{\infty-}$} if for any $q \in (0,\infty)$, there exists $\eps_q > 0$ such that $\vert \x^\eps(t) \vert_q$  is bounded uniformly in $t \geq 0$  and $0 < \eps < \eps_q$.
		\end{defination}
		\begin{defination}[Stochastic Process of Order $\eps^\kappa$]
			A family of stochastic processes $ \x(t)=(\x^\eps(t))_{t\ge 0}$, $\eps>0$ is said to be \textbf{ of order $\eps^\kappa$} 
			for some $\kappa>0$  if for any $q\in [1,\infty)$ there exist   constants $\eps_q>0, ~ C_q>0$, 
			such that 			
			$$\eps^{-\kappa} \left\vert\x^\eps(t) \right\vert_{q} \le C_q   ~~\text{for all}~~ (t,\eps)\in[0,\infty)\times (0,\eps_q).	$$			
		Similarly, a family of random variables  $ X=(X^\eps)$, $\eps>0$ is said to be of order $\eps^\kappa$ 
			for some $\kappa>0$  if for any $q\in [1,\infty)$ there exist   constants $\eps_q>0,~ C_q>0$, 
			such that			
			$$\eps^{-\kappa} \left\vert{X^\eps} \right\vert_{q} \le C_q   ~~\text{for all}~~ \eps\in (0,\eps_q).	$$
		\end{defination}
       \begin{remark}
		To avoid cluttering the notation, we usually suppress the superscript index $\eps$ in the notation of families of function, stochastic processes, and random variables, but keep this dependency in mind.
        \end{remark}

		In  the study of dynamical systems, stability properties are  powerful to characterize the qualitative behavior of a solution. We introduce two concepts of stability  for a family of matrices. 
		
		\begin{defination}
			Let  $(A(t))_{t\ge 0}$  be  a family  of  measurable  locally  bounded processes with  values in the class of  $n \times n $  matrices, let $\zeta $  be  the matrix-valued  solution of \begin{equation}
				\dot{\zeta}(t)= A(t) \zeta(t),~~~\zeta(0)= I.
			\end{equation}
			
			\noindent	 We will  say  that $(A(t))_{t\ge 0}$  is  \textbf{exponentially  stable} if there  exist  some positive  constants  $C$  and $c$    such  that for $s \leq t$,
			\begin{equation} \label{expstab}
				\vert \zeta(t)\vert\leq C e^{ (- c(t-s))} \vert \zeta(s) \vert.
			\end{equation}
		\end{defination}
		
		\begin{remark}
			In case of  constant matrix $A$,  it is well-known that $A$ is exponentially stable if and only if its eigenvalues have negative real part.
		\end{remark}
		
		\begin{defination}
			Let $A, B \in\mathcal{M}_n$, where $\mathcal{M}_n$ denotes the class of $n \times n$ real matrices.\\ We write $A\geqm  B$ if $A$ and $B$  are symmetric and
			$A -B$ is positive semi-definite.\\ We write $A \succ B$ if $A$ and $B$ are symmetric and $A -B$
			is positive definite.
		\end{defination}
		
		\begin{defination}\label{Kk_stable}
			Consider a family $(\K(t))_{t\ge 0}$ of absolutely continuous adapted (w.r.t. the observable filtration) processes with values in the class of  symmetric positive definite $n \times n $ matrices and a family $(k(t))_{t\ge 0}$ of locally bounded deterministic functions with positive values. We will say that  a family $(A(t))_{t\ge 0}$  of measurable locally bounded process with values in the class $n \times n$ matrices,  is $(\K(t), k(t))_{t\ge 0}$-stable,  or in short, that $A$ is $(\K, k)$-stable, if  for all $t\ge 0$,
			\begin{equation} \label{QKstab}
				\dot{\K}(t)\geqm A(t)\K(t) + \K(t)A^{\top}(t) + k(t)\K(t).
			\end{equation}			
		\end{defination}
		Note that the relationship between these two stability concepts will be established below in Lemma \ref{lem2}.

		\section{Error estimation   for  the extended Kalman  filter}
		When estimating a hidden signal, it is of relevance to know whether  the scale of the error between the estimate and the true signal is large or not. This is why   
		we are interested in conditions under which  the extended Kalman filter for problem \eqref{FiltPb} provides a good approximation in the case of a strongly injective drift  coefficient of the observation process. We show in  the following  theorem   that the strongly injectivity  combined  with an initial  guess of order $\sqrt{\eps}$  and a set of technical  conditions ensure that the estimation error $\sig(t)-\condmean(t) $ is of order  $\sqrt{\eps}$ at any future time point $t$. 
		\begin{theorem} \label{CvgTH}
			Let $(M(t), \covm (t))_{t \geq 0}$  be   an extended  Kalman filter for problem \eqref{FiltPb} such  that  $M$  and $ \qcovm= \frac{1}{\eps }\covm$
			satisfy  \eqref{EKF1}  and \eqref{EKF2}  respectively,  and  in  addition 
				\begin{enumerate}[ wide, labelwidth=!, labelindent=0pt]
					\renewcommand{\labelenumi}{(A\theenumi)} 				
					\item \label{cond1}  The  variable $\qcovm^{-1/2}(0) (\sig(0)-M(0))$  is  of  order $\sqrt{\eps}$.
					\item \label{cond2} The functions  $\sigma$, $\coDifS$  and $\coDifO$  are  bounded.
					\item \label{cond3} The  function $f$ and $h$  are  $\mathcal{C}^1$  and   almost  linear with  respect  to  the  signal.
					\item \label{cond4} 
					\hspace*{-3mm}
					\begin{tabular}[t]{l}The    function $h$   is   strongly injective with  respect  to  the  signal,\\ $\af= \sigma \sigma^\top - \coDifS(I-\coDifO^\top(\coDifO\coDifO^\top)^{-1} \coDifO)\coDifS^\top$   and $(\coDifO\coDifO^\top)^{-1}$ are uniformly  elliptic, and \\   the quotient   between  the largest   and  the  smallest eigenvalues  of  $\qcovm (0)$   is  bounded.
					\end{tabular}	
				\end{enumerate}
				
				\noindent	Then   $\qcovm^{-1/2}(t) (\sig(t)-M(t))$  is  of  order $\sqrt{\eps}$.
				
			\end{theorem}
			
			\begin{remark} Assumptions (A\ref{cond2})-(A\ref{cond4}) of this theorem can be checked  for the $SI^\pm S$ epidemic model  considered in Sec.~\ref{sec:epidemic_example} on each small  interval $[t_i, t_{i+1} ), ~ i=0, \ldots, N_t-1$,   under the natural assumption, that  all compartment sizes are strictly positive, i.e., the proportions $Y$ and $Z$ of nondetected and detected infected take values only in the open interval $(0,1)$. The assumption (A\ref{cond2})) is fulfilled  by the definition of the functions  $\sigma$, $\coDifS$  and $\coDifO$ which are bounded for  $y,z\in(0,1)$. The drift coefficients $f$ and $h$ in the SDEs for the hidden signal  and the observation are linear functions of $y$, since the proportion of susceptible individuals $s$ is treated as a constant. Thus  $f$ and $h$  are $\mathcal{C}^1$  functions, and  also  almost linear, hence 
            (A\ref{cond3}) is fulfilled.
				
				For assumption (A\ref{cond4}) it can be observed, that  $h$ is strongly injective since it is linear, and $\af$  and $(\ell \ell ^\top)^{-1}$  are strictly positive scalars since all compartment sizes are strictly positive. The initial matrix $\qcovm(0)$ is a scalar and can be chosen to satisfy the assumption.

			\end{remark}

			\noindent 	In  order  to  prove    Theorem  \ref{CvgTH}, we    first  establish  the  following lemma  and  then we will   show that the  theorem is  a  particular  case  of  this  lemma.

			\begin{lemma}\label{Lemma_2.2.2}
				Let $(M(t),\covm(t))_{t \geq 0}$ be an EKF satisfying assumptions (A\ref{cond1})-{(A\ref{cond4})} of Theorem \ref{CvgTH}. Suppose also that { $ \qcovm= \eps^{-1} \covm$ }		satisfies 
				\begin{equation} \label{cond1lem222}
					\qcovm(t)  \big(\nabla_y h^{\top} (\coDifO\coDifO^\top)^{-1}\nabla_yh \big )(t,M(t))\qcovm(t) +\af(t,M(t))\geqm k(t)\qcovm(t),
				\end{equation}
				for some family of deterministic positive functions $k(t)$   and the function $\af$ given in (A\ref{cond4}). Further, it is assumed that    there is a constant $C>0$ such that  
				\begin{equation}
					\label{cond2lem222} 	
					\qcovm(t) + \qcovm^{-1}(t)\leqm Ck(t)I.
				\end{equation}
				Then $\qcovm^{-1/2} (t)\big(\sig(t)-M(t)\big)$ is of order $\sqrt{\eps}$.
			\end{lemma}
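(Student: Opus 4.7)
The plan is to reduce the claim to a Lyapunov estimate for the functional
\[
\phi^\eps(t) := (\sig(t)-M(t))^\top \qcovm^{-1}(t)(\sig(t)-M(t)),
\]
since $|\qcovm^{-1/2}(\sig-M)|^{2} = \phi^\eps$; establishing that $\phi^\eps$ is of order $\eps$ in every $L^{q}$ norm uniformly in $t$ yields the conclusion. Denote the filtering error by $X^\eps := \sig - M$. Subtracting \eqref{EKF1} from the signal equation of \eqref{FiltPb}, and substituting $d\obs = h(\cdot,\sig)\,dt + \sqrt{\eps}\,\ell(\cdot,\sig)\,dW^{(2)}$, gives the error SDE
\[
dX^\eps = \bigl[f(t,\sig)-f(t,M) - G(t)(h(t,\sig)-h(t,M))\bigr]\,dt + \sqrt{\eps}\,\sigma(t,\sig)\,dW^{(1)} + \sqrt{\eps}\,(g-G\ell)(t,\sig)\,dW^{(2)}.
\]
By (A3) the drift equals $[\nabla_y f - G\nabla_y h](t,M)\,X^\eps + r^\eps$ with $|r^\eps| \leq \mu^\eps(1+|G|)|X^\eps|$. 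The identity $\ell(I - \ell^\top(\ell\ell^\top)^{-1}\ell) = 0$ produces the clean diffusion covariance $\sigma\sigma^\top + (g-G\ell)(g-G\ell)^\top = a + \qcovm\,\nabla_yh^\top(\ell\ell^\top)^{-1}\nabla_y h\,\qcovm$ at $M$, which will feed the quadratic-variation term.

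Applying Itô's formula to $\phi^\eps$ produces three drift contributions: $2X^{\eps\top}\qcovm^{-1}\,dX^\eps$, the curvature term $X^{\eps\top}\,d(\qcovm^{-1})\,X^\eps = -X^{\eps\top}\qcovm^{-1}\dot{\qcovm}\qcovm^{-1}X^\eps\,dt$, and $\trace(\qcovm^{-1}\,d[X^\eps,X^\eps])$. Inserting the Riccati equation \eqref{EKF2} into the curvature term and using $G\nabla_yh = g\ell^\top(\ell\ell^\top)^{-1}\nabla_yh + \qcovm\,\nabla_yh^\top(\ell\ell^\top)^{-1}\nabla_yh$ in the first, the symmetric contributions involving $\nabla_y f - g\ell^\top(\ell\ell^\top)^{-1}\nabla_y h$ cancel exactly. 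A direct computation then gives
\[
d\phi^\eps = \Bigl[- X^{\eps\top}\bigl(\nabla_y h^\top(\ell\ell^\top)^{-1}\nabla_y h + \qcovm^{-1} a\,\qcovm^{-1}\bigr)X^\eps + R^\eps(t) + \eps\, T^\eps(t)\Bigr]\,dt + d\mathcal{M}^\eps(t),
\]
with $\mathcal{M}^\eps$ a local martingale, $T^\eps = \trace\bigl(\qcovm^{-1} a + \nabla_yh^\top(\ell\ell^\top)^{-1}\nabla_yh\,\qcovm\bigr)$, and $R^\eps = O\bigl(\mu^\eps\|\qcovm^{-1}\|\,|X^\eps|^{2}\bigr)$ collecting the almost-linearity remainder together with the mismatch between coefficients evaluated at $\sig$ versus at $M$.

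To close the estimate, invoke \eqref{cond1lem222} after pre- and post-multiplying by $\qcovm^{-1}$, then absorb the factor $(\ell\ell^\top)^{-1}$, which is uniformly bounded below thanks to the boundedness of $\ell$ in (A2); this yields $X^{\eps\top}[\nabla_y h^\top(\ell\ell^\top)^{-1}\nabla_y h + \qcovm^{-1} a\,\qcovm^{-1}]X^\eps \geq c_0 k(t)\phi^\eps$ for some $c_0>0$. The bound \eqref{cond2lem222} yields $\|\qcovm^{-1}\| + \|\qcovm\| \leq Ck(t)$ and $|X^\eps|^{2} \leq Ck(t)\phi^\eps$, hence $|R^\eps| \leq C'\mu^\eps k(t)\phi^\eps$, which is absorbable into the dissipative term once $\eps$ is small enough that $C'\mu^\eps \leq c_0/2$, while $\eps T^\eps \leq C\eps k(t)$ by (A2)-(A3). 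Taking expectation gives $\tfrac{d}{dt}\E[\phi^\eps] \leq -\tfrac{c_0}{2} k(t)\E[\phi^\eps] + C\eps k(t)$, and since (A1) yields $\E[\phi^\eps(0)] = O(\eps)$, Gronwall's inequality delivers $\E[\phi^\eps(t)] = O(\eps)$ uniformly in $t$. Repeating the scheme for $(\phi^\eps)^{p}$ (the additional Itô quadratic variation of $\phi^\eps$ produces an $O(\eps\,k(t)\phi^\eps)$ term absorbed by the dominant drift) yields $L^{q}$ estimates for every $q\ge 1$, hence the $L^{\infty-}$ bound defining "of order $\sqrt{\eps}$".

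The main technical obstacle is the simultaneous control of (i) the almost-linearity remainders $r_f, r_h$, (ii) the coefficient mismatch between $\sig$ and $M$ in the noise terms (so that the quadratic-variation identity derived at $M$ is used with an error that is itself $O(\mu^\eps k(t)\phi^\eps)$), and (iii) the extra quadratic-variation contribution when estimating higher moments of $\phi^\eps$. All three must ultimately be dominated by $\mu^\eps k(t)\phi^\eps$ or $\eps k(t)$, which is achieved by repeatedly combining the coupled bounds \eqref{cond1lem222}-\eqref{cond2lem222} with Young's inequality to split cross terms $u^\top v$ into a part absorbed into $-k(t)\phi^\eps$ and a part bounded by $\eps$ times an $\qcovm$-dependent factor that is controlled by \eqref{cond2lem222}.
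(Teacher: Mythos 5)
Your argument is correct in substance and rests on exactly the same two inequalities \eqref{cond1lem222}--\eqref{cond2lem222}, but it takes a more self-contained route than the paper. The paper never applies It\^o's formula to $\phi^\eps=\x^\top\qcovm^{-1}\x$ directly: it writes the error SDE in the form $d\x = A\x\,dt + F\,dt + D\,d\widetilde W$, shows that \eqref{cond1lem222} is precisely the statement that $A$ is $(\qcovm,k)$-stable, verifies the two hypotheses \eqref{cond1lem132} and \eqref{cond2lem132} on $F$ and $D$ (the first via a Cauchy--Schwarz reduction, the second from \eqref{cond2lem222} and boundedness of the diffusion coefficients), and then invokes Picard's Lemma~\ref{Lemma_1.3.2} as a black box. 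You instead unpack that black box: your It\^o computation, with the Riccati equation \eqref{EKF2} feeding the curvature term $-\qcovm^{-1}\dot\qcovm\qcovm^{-1}$ and the cancellation of the $\nabla_yf-\coDifS\coDifO^\top(\coDifO\coDifO^\top)^{-1}\nabla_yh$ blocks, correctly produces the exact dissipation $-\x^\top\bigl(\nabla_yh^\top(\coDifO\coDifO^\top)^{-1}\nabla_yh+\qcovm^{-1}a\,\qcovm^{-1}\bigr)\x$, and the Gronwall plus higher-moment bootstrap then reproves the content of Lemma~\ref{Lemma_1.3.2} in this special case. What your version buys is transparency about where condition \eqref{cond1lem222} acts (it is literally the coercivity of the dissipative term after conjugation by $\qcovm^{-1}$ and absorption of $(\coDifO\coDifO^\top)^{-1}$); what it costs is having to redo the localization and $L^q$ bootstrap that the imported lemma already packages, and your remainder bound $|R^\eps|\le C'\mu^\eps k(t)\phi^\eps$ involves the same silent absorption of extra powers of $k(t)$ (coming from $|G|\lesssim 1+\|\qcovm\|$ and $|\x|^2\le Ck(t)\phi^\eps$) that the paper also performs at the step $F^\top\qcovm^{-1}F\le\mu_\eps(1+|\qcovm|+|\qcovm^{-1}|)|\x|^2$, so you are no less rigorous there than the original.

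One aside in your writeup should be dropped or corrected: you claim the mismatch between diffusion coefficients evaluated at $\sig(t)$ versus at $M(t)$ contributes an error of order $\mu^\eps k(t)\phi^\eps$. The coefficients $\sigma,\coDifS,\coDifO$ are only assumed bounded in (A2), not almost linear, so this mismatch is not controlled by $|\sig(t)-M(t)|$ and your ``clean'' identity $\sigma\sigma^\top+(\coDifS-G\coDifO)(\coDifS-G\coDifO)^\top=a+\qcovm\nabla_yh^\top(\coDifO\coDifO^\top)^{-1}\nabla_yh\,\qcovm$ does not survive the evaluation at two different points. Fortunately it is not needed: the quadratic-variation contribution only requires the one-sided bound $\trace\bigl(\qcovm^{-1}DD^\top\bigr)\le C\eps\,k(t)$, which follows from (A2) and \eqref{cond2lem222} alone (this is how the paper handles $D$ via \eqref{cond2lem132}), so the proof closes without that identity.
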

			
			\noindent	The proof of  this Lemma is based on the following lemma, which is proven  in \cite{picard1991efficiency}. 
			
			\begin{lemma}\label{Lemma_1.3.2}
				Let $\widetilde{W}(t)$ be a $\mathbb{F}$-Brownian motion with values in $\mathbb{R}^{r}$, $n \in \mathbb{N}$, and let $A$ be a family of $\mathbb{F}$-adapted $(\K, k)$-stable processes where $\K$ is $\mathbb{F}$-adapted  and $k$ is  some family of deterministic positive and  locally bounded  functions. We suppose that $\x(t)$ is a family of $\mathbb{R}^{n}$-valued semi-martingales satisfying
				\begin{equation}
					d\x(t)=A(t)\x(t)dt + F(t)dt + D(t)d\widetilde{W}(t), 
				\end{equation}
				where $F(t)$ and $D(t)$ are  $\mathbb{F}$-adapted processes satisfying   for some $ \a \in [0,1/2) $ 
				\begin{align} 
					\x^{\top}(t)\K^{-1}(t)F(t) &\leq           ~   \a k(t)\x^{\top}(t)\K^{-1}(t)\x(t) + \mathcal{O}(k(t))\label{cond1lem132}, \\
					D^{\top}(t)\K^{-1}(t)D(t) &=~ \mathcal{O}(k(t)).\label{cond2lem132}
				\end{align}	
				If $\x^{\top}(0)\K^{-1}(0)\x(0)$ is bounded in $L^{q}$, 		
				then for any $q\geq 1$ and $\eps$ small enough,  the process\\ $\x^{\top}(t)\K^{-1}(t)\x(t)$ is  bounded in $L^{q}$.
			\end{lemma}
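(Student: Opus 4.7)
\textbf{Proof plan for Lemma \ref{Lemma_2.2.2}.} My strategy is to derive the dynamics of the error $\x(t) = \sig(t) - M(t)$, rescale it by $\eps^{-1/2}$ so the initial condition is of order one, and then invoke Lemma \ref{Lemma_1.3.2} with $\K = \qcovm$ and the family $k$ provided by the hypotheses.

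\emph{Step 1: Error dynamics.} Subtracting \eqref{EKF1} from \eqref{FiltPb} and inserting $d\obs = h(t,\sig)dt + \sqrt{\eps}\,\ell(t,\sig)dW^{(2)}$ into the innovation term, I would write
\begin{equation}
d\x = \bigl[f(t,\sig)-f(t,M) - G(t)(h(t,\sig)-h(t,M))\bigr]dt + \sqrt{\eps}\,\sigma(t,\sig)dW^{(1)} + \sqrt{\eps}\bigl(g(t,\sig)-G(t)\ell(t,\sig)\bigr)dW^{(2)}.
\end{equation}
Using (A3), I decompose $f(t,\sig)-f(t,M) = \nabla_y f(t,M)\x + r_f$ and $h(t,\sig)-h(t,M) = \nabla_y h(t,M)\x + r_h$ with $|r_f|,|r_h|\le C\mu^{\eps}|\x|$, where the small gap between the almost derivatives granted by (A3) and $\nabla_y f(t,M),\nabla_y h(t,M)$ is itself $O(\mu^\eps)$ and absorbed. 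Setting $\widehat A(t) := \nabla_y f(t,M) - G(t)\nabla_y h(t,M)$ and $D(t) := [\sigma(t,\sig),\,g(t,\sig)-G(t)\ell(t,\sig)]$, which is bounded by (A2) together with the boundedness of $G$ (the latter following from (A2), the uniform ellipticity in (A4), and $\qcovm \leqm Ck I$), this gives
\begin{equation}
d\x = \widehat A(t)\,\x\, dt + R(t)\, dt + \sqrt{\eps}\, D(t)\, dW,\qquad |R(t)|\le C\mu^\eps|\x|.
\end{equation}

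\emph{Step 2: $(\qcovm, c_0 k)$-stability of $\widehat A$.} Direct algebraic expansion of the Riccati equation \eqref{EKF2}, using the definition of $G$ and of $a$, rewrites it as
\begin{equation}
\dot\qcovm = \widehat A\,\qcovm + \qcovm\,\widehat A^\top + \qcovm\,\nabla_y h^\top(\ell\ell^\top)^{-1}\nabla_y h\,\qcovm + a(t,M).
\end{equation}
The uniform ellipticity of $(\ell\ell^\top)^{-1}$ in (A4) gives $\qcovm\nabla_y h^\top(\ell\ell^\top)^{-1}\nabla_y h\,\qcovm \geqm c\,\qcovm\nabla_y h^\top\nabla_y h\,\qcovm$, so the last two terms dominate $c_0\bigl(\qcovm\nabla_y h^\top\nabla_y h\,\qcovm + a\bigr)$, which by hypothesis \eqref{cond1lem222} is $\geqm c_0 k \qcovm$. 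Thus $\dot\qcovm \geqm \widehat A\qcovm + \qcovm\widehat A^\top + c_0 k\qcovm$, i.e.\ $\widehat A$ is $(\qcovm,c_0 k)$-stable.

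\emph{Step 3: Rescaling and application of Lemma \ref{Lemma_1.3.2}.} Setting $\widetilde\x := \eps^{-1/2}\x$, the rescaled process satisfies
\begin{equation}
d\widetilde\x = \widehat A(t)\widetilde\x\, dt + \eps^{-1/2}R(t)\, dt + D(t)\, dW,\qquad |\eps^{-1/2}R| \le C\mu^\eps|\widetilde\x|.
\end{equation}
I check the four hypotheses of Lemma \ref{Lemma_1.3.2} with $\K=\qcovm$: (i) initial condition $\widetilde\x(0)^\top\qcovm^{-1}(0)\widetilde\x(0) = \eps^{-1}|\qcovm^{-1/2}(0)(\sig(0)-M(0))|^2$ is bounded in every $L^q$ by (A1); (ii) stability from Step 2; (iii) using $\qcovm \leqm Ck I$ (from \eqref{cond2lem222}) to get $|\widetilde\x|^2 \le Ck\,\widetilde\x^\top\qcovm^{-1}\widetilde\x$, and $\|\qcovm^{-1/2}\|^2 \le Ck$, I obtain $\widetilde\x^\top\qcovm^{-1}(\eps^{-1/2}R) \le C'\mu^\eps k\,\widetilde\x^\top\qcovm^{-1}\widetilde\x$, which for $\eps$ small is at most $\alpha k\,\widetilde\x^\top\qcovm^{-1}\widetilde\x$ with $\alpha<\tfrac12$; (iv) $D^\top\qcovm^{-1}D \le \|\qcovm^{-1}\|\cdot\|D\|^2 \le C''k$ since $D$ is bounded and $\qcovm^{-1}\leqm Ck I$. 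Lemma \ref{Lemma_1.3.2} then gives $\widetilde\x^\top\qcovm^{-1}\widetilde\x$ bounded in $L^q$ for every $q\ge 1$, i.e.\ $\eps^{-1}|\qcovm^{-1/2}(\sig-M)|^2$ is bounded in $L^q$, which is exactly the statement that $\qcovm^{-1/2}(\sig-M)$ is of order $\sqrt\eps$.

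\emph{Main obstacle.} The crux is Step 2: one must recognize the Riccati expression \eqref{EKF2} as $\widehat A\qcovm + \qcovm\widehat A^\top$ plus exactly the positive-definite quantity that the hypothesis \eqref{cond1lem222} bounds below by $k\qcovm$. A secondary, more bookkeeping-heavy difficulty is the careful propagation of uniform bounds through the innovation gain $G$ and the almost-derivative gap, so that the drift residual $R$ and the diffusion coefficient $D$ remain bounded independently of $\eps$ and can be absorbed by $k$ via \eqref{cond2lem222}.
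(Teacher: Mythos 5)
Your proposal does not prove the statement at hand. The statement is Lemma \ref{Lemma_1.3.2} itself — the abstract stability-plus-moment lemma — yet your argument opens by declaring itself a proof of Lemma \ref{Lemma_2.2.2} and then, in Step 3, \emph{invokes} Lemma \ref{Lemma_1.3.2} as a black box. As a proof of Lemma \ref{Lemma_1.3.2} this is circular: every piece of genuine content in your text (the error dynamics, the $(\qcovm,c_0k)$-stability of $\widehat A$ extracted from the Riccati equation, the $\eps^{-1/2}$ rescaling) concerns the \emph{reduction} of the filtering problem to the lemma, not the lemma itself. Incidentally, that reduction is essentially the paper's own proof of Lemma \ref{Lemma_2.2.2}, so your reconstruction of that other argument is sound; but it leaves the assigned statement untouched. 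Note also that the paper itself gives no proof of Lemma \ref{Lemma_1.3.2} — it is quoted from \citet{picard1991efficiency} — so a blind proof must supply the argument from scratch.

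What is missing is the actual moment estimate. The natural route is to apply It\^o's formula to $V(t)=\x^\top(t)\K^{-1}(t)\x(t)$. The $(\K(t),k(t))$-stability hypothesis \eqref{QKstab}, conjugated by $\K^{-1}$ (using $\frac{d}{dt}\K^{-1}=-\K^{-1}\dot{\K}\K^{-1}$), gives $\frac{d}{dt}\K^{-1}(t) \leqm -\K^{-1}(t)A(t)-A^{\top}(t)\K^{-1}(t)-k(t)\K^{-1}(t)$, so the $A(t)\x(t)$ part of the drift of $V$ contributes at most $-k(t)V(t)$; condition \eqref{cond1lem132} adds at most $2\alpha k(t)V(t)+\mathcal{O}(k(t))$, and condition \eqref{cond2lem132} bounds the It\^o correction $\trace\big(D^{\top}(t)\K^{-1}(t)D(t)\big)$ by $\mathcal{O}(k(t))$. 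This yields a dissipative inequality $dV(t) \leq \big(-(1-2\alpha)k(t)V(t)+Ck(t)\big)dt + dm(t)$ with $m$ a local martingale, and $1-2\alpha>0$ is precisely where $\alpha<\frac{1}{2}$ enters. One must then pass from this pathwise drift bound to bounds on $\E[V^{q}(t)]$ uniform in $t$: apply It\^o to $V^{q}$, note that the quadratic-variation contribution $d\langle m\rangle(t) \leq 4V(t)\,D^{\top}(t)\K^{-1}(t)D(t)\,dt = \mathcal{O}(k(t))V(t)\,dt$ can be absorbed via Young's inequality into the negative drift $-(1-2\alpha)q\,k(t)V^{q}(t)$, localize to discard the martingale term, and conclude by a Gronwall/comparison argument that $\sup_{t}\E[V^{q}(t)]\leq C_{q}$ given the assumed $L^{q}$ bound at $t=0$. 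None of these steps — the conjugation of the stability inequality, the It\^o computation for $V^{q}$, the control of the martingale part and the localization — appears in your proposal, so as it stands the statement remains unproved.
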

			
			\begin{proof} (of Lemma \ref{Lemma_2.2.2})				
				Let  us recall  the  dynamics of the signal $Y$ and  of the extended Kalman filter $M$ from \eqref{FiltPb} and \eqref{EKF1}, respectively.		
				 	\begin{align*}
						d\sig(t) ~=& f(t,\sig(t))dt + \sqrt{\varepsilon}\sigma(t,\sig(t))dW^{(1)}(t) + \sqrt{\varepsilon} \coDifS (t,\sig(t))dW^{(2)}(t),\\
						dM(t)~=& f(t,M(t))dt + \big[\coDifS \coDifO^{\top}(t,M(t)) + \qcovm(t)\nabla_y h^{\top}(t,M(t))\big]\big(\coDifO \coDifO^{\top}\big)^{-1}\times\\
						&\hspace*{6em} \big[d\obs(t) - h(t,M(t))dt\big],\\
						~=& f(t,M(t))dt + \big[\coDifS \coDifO^{\top}(t,M(t)) + \qcovm(t)\nabla_y h^{\top}(t,M(t))\big]\big(\coDifO\coDifO^{\top}\big)^{-1}\times \\
						& \hspace*{7em} \big[h(t,\sig(t))dt  + \sqrt{\eps}\coDifO(t,\sig(t))dW^{(2)}(t) - h(t,M(t))dt\big].
				\end{align*} 	
				Thus, if we denote the difference $\sig-M $ by $\x$, its dynamics is given by  
				
                    \begin{align*}	
					d\x(t)~=& \big[f(t,\sig(t)) - f(t,M(t))\big]dt\\
					& -\big[\coDifS \coDifO^{\top}(t,M(t)) + \qcovm(t)\nabla_yh^{\top}(t,M(t))\big]\big(\coDifO\coDifO^{\top}\big)^{-1} \big[h(t,\sig(t)) - h(t,M(t))\big]dt\\
					& + \sqrt{\eps}\sigma(t,\sig(t))dW^{(1)}(t) + \sqrt{\eps}\coDifS (t,\sig(t))dW^{(2)}(t)\\
					& - \sqrt{\eps}\big[\coDifS \coDifO^{\top}(t,M(t)) + \qcovm(t)\nabla_yh^{\top}(t,M(t))\big]\big(\coDifO\coDifO^{\top}\big)^{-1}\coDifO(t,\sig(t))dW^{(2)}(t)\\
					~=& \big[f(t,\sig(t)) - f(t,M(t))\big]dt \\
					&-\big[\coDifS \coDifO^{\top}(t,M(t)) + \qcovm(t)\nabla_yh^{\top}(t,M(t))\big]\big(\coDifO\coDifO^{\top}\big)^{-1} \big[h(t,\sig(t)) - h(t,M(t))\big]dt\\
					&+ \sqrt{\eps}\sigma(t,\sig(t))dW^{(1)}(t) + \sqrt{\eps}\Big[\coDifS(t,\sig(t))\\
					&\hspace*{2em}-\big[\coDifS \coDifO^{\top}(t,M(t)) + \qcovm(t)\nabla_yh^{\top}(t,M(t))\big]\big(\coDifO\coDifO^{\top}\big)^{-1}\coDifO(t,\sig(t))\Big]dW^{(2)}(t).
				\end{align*}

				Here,	the second equality is obtained by simple expansion. Adding and subtracting the same term and rearranging, we get 
				\begin{align*}	
					d\x(t)	~=&\Big[f(t,\sig(t)) - f(t,M(t))-\Big(\nabla_yf(t,M(t))-\nabla_yf(t,M(t))\Big)\x(t) \Big]dt\\
					&- \big[\coDifS \coDifO^{\top}(t,M(t)) + \qcovm(t)\nabla_yh^{\top}(t,M(t))\big]\big(\coDifO\coDifO^{\top}\big)^{-1}\\
					&\times  \big[h(t,\sig(t)) - h(t,M(t)) - \big(\nabla_yh(t,M(t))-\nabla_yh(t,M(t))\big)\x(t) \big]dt \\
					& + \sqrt{\eps}\Big(\sigma(t,\sig(t)),\coDifS(t,\sig(t))- \\
					&\big[\coDifS \coDifO^{\top}(t,M(t)) + \qcovm(t)\nabla_yh^{\top}(t,M(t))\big]\big(\coDifO\coDifO^{\top}\big)^{-1}\coDifO(t,\sig(t))\Big) \left(\begin{array}{c} dW^{(1)}(t)\\ dW^{(2)}(t) \end{array} \right)\\
					~=& \left[\nabla_yf(t,M(t))- \big( \coDifS\coDifO^{\top}(t,M(t))+ \qcovm(t)\nabla_yh^{\top}\big)(\coDifO\coDifO^{\top})^{-1}\nabla_yh(t,M(t)) \right]\x(t) dt\\
					&  +\Big[ f(t,\sig(t)) - f(t,M(t)) -\nabla_yf(t,M(t))\x(t)  \\
					& 	- \big(\coDifS\coDifO^{\top}(t,M(t)) + \qcovm(t)\nabla_yh^{\top}(t,M(t))\big)  (\coDifO\coDifO^{\top})^{-1}  \\ 	
					&\times  \big(h(t,\sig(t)) - (h(t,M(t))-\nabla_yh(t,M(t))\x (t) )\big)\Big] dt + \sqrt{\eps}\Big(\sigma(t,\sig(t)), \\
					& \coDifS(t,\sig(t)) - \big[\coDifS \coDifO^{\top} + \qcovm (t)\nabla_yh^{\top}\big](t,M(t))\big(\coDifO\coDifO^{\top}\big)^{-1}\coDifO(t,\sig(t))\Big) \left(\begin{array}{c}  dW^{(1)}(t)\\ dW^{(2)}(t) \end{array}\right).
				\end{align*}
				\paragraph{Application of Lemma \ref{Lemma_1.3.2}} 
				 The above SDE for $\x$ can be rewritten in terms of the notation used in Lemma \ref{Lemma_1.3.2} by using  following settings  
				\begin{align*}	
					\K(t)~&=  \qcovm(t),		\\
					d\x(t)~&= A(t) \x(t) dt + F(t)dt +D(t) d\widetilde{W}(t),
				\end{align*}			
				where 			
				\begin{align}
					\label{Adef}
						A(t)~:=&  \nabla_yf(t,M(t))- \big( \coDifS\coDifO^{\top}(t,M(t))+ \qcovm(t)\nabla_yh^{\top}\big)(\coDifO\coDifO^{\top})^{-1}\nabla_yh(t,M(t)) ,\\
						F(t)~:=&  f(t,\sig(t)) - f(t,M(t)) -\nabla_yf(t,M(t))\x(t) \\
						& - \big(\coDifS\coDifO^{\top}(t,M(t)) + \qcovm(t)\nabla_yh^{\top}(t,M(t))\big)  \\ 	
						&\times (\coDifO\coDifO^{\top})^{-1}(t,M(t))    \big(h(t,\sig(t)) - (h(t,M(t))-\nabla_yh(t,M(t))\x(t) )\big)  \\
						D(t)~:=& \sqrt{\eps} \big(\sigma(t,\sig(t)),~ \coDifS(t,\sig(t))  \\
						&  - \big[\coDifS \coDifO^{\top}(t,M(t)) + \qcovm(t)\nabla_yh^{\top}(t,M(t))\big]\big(\coDifO\coDifO^{\top}\big)^{-1}\coDifO(t,\sig(t))\big),\\
						\widetilde{W}(t):=~& \left(\begin{array}{c} W^{(1)}(t)\\ W^{(2)}(t) \end{array} \right).
					\end{align}		
					We will now verify the assumptions from Lemma \ref{Lemma_1.3.2}.
					\paragraph{$A$ is $(\qcovm,k)$-stable}		
				 We recall the Riccati ODE \eqref{EKF2}	for $\qcovm$				
				\begin{align*}					
					\dot{\qcovm}(t)= &- \qcovm(t)\nabla_yh^\top(\coDifO\coDifO^\top)^{-1}\nabla_yh\qcovm(t) +\big[\nabla_yf - \coDifS \coDifO^\top(\coDifO\coDifO^\top)^{-1}\nabla_yh\big]\qcovm(t) 
					\\
					&+ \qcovm(t)\big[\nabla_yf - \coDifS \coDifO^\top(\coDifO\coDifO^\top)^{-1} \nabla_yh\big]^{ \top}  +\af(t,M(t)),
				\end{align*}
				and condition \eqref{cond1lem222} which implies  that for some family of deterministic positive functions $k$ it holds 
				 $\af(t,M(t))\geqm - \qcovm(t)\nabla_yh^\top{ (\coDifO\coDifO^\top)^{-1}}\nabla_yh\qcovm(t) + k(t)\qcovm(t)$. Substituting this in the above ODE and using the definition of $A$, see above in \eqref{Adef}, yields 				
				\begin{align*}
					\dot{\qcovm}(t)
					\geqm &- 2\qcovm(t)\nabla_yh^\top(\coDifO\coDifO^\top)^{-1}\nabla_yh\qcovm(t) +\Big[\nabla_yf -\coDifS \coDifO^\top(\coDifO\coDifO^\top)^{-1}\nabla_yh\Big]\qcovm(t)
					\\
					&+ \qcovm(t)\Big[\nabla_yf - \coDifS \coDifO^\top(\coDifO\coDifO^\top)^{-1} \nabla_y h\Big]^{\top}
					+k(t)\qcovm(t) ,\\
					 =& A(t)\qcovm(t) + \qcovm(t)A^\top(t)+k(t)\qcovm(t). \qquad                                  
				\end{align*}
				This proves that $A$ is $(\qcovm,k)$-stable.
					
				\paragraph{ Verfication of condition \eqref{cond1lem132}}
				For that, it is sufficient to show  that     for some family of functions $\overline \eta$  with values  in $[0, {1}/{4}) $, it holds 
					\begin{equation} \label{condSub}
						F^{\top}(t)\K^{-1}(t)F(t) \leq  \overline \eta k^2(t) \x^{\top}(t)\K^{-1}(t)\x(t).
					\end{equation}					
				This follows from  	an application of the Cauchy-Schwarz inequality corresponding  to  the inner product   defined by the positive definite matrix $\K^{-1}(t)$ to
						
						\begin{align*}
							\left\vert \x^{\top}(t)\K^{-1}(t)F(t) \right\vert &\leq \Big(\x^{\top}(t)\K^{-1}(t)\x(t)\Big)^{1/2}\Big(F^{\top}(t)\K^{-1}(t)F(t)\Big)^{1/2}\\
							&\leq \Big(\x^{\top}(t)\K^{-1}(t)\x(t)\Big)^{1/2}\Big( \overline \eta k^{2}(t)\x^{\top}(t)\K^{-1}(t)\x(t)\Big)^{1/2}\\
							&= (\overline \eta)^{1/2} k(t)\x^{\top}(t)\K^{-1}(t)\x(t).
						\end{align*}	
						Hence, we have the relation $\eta = \overline \eta)^{1/2}$.	 Next, we have for  $\K(t)= \qcovm(t)$
						\begin{align*}
							F^{\top}(t)\qcovm^{-1}(t)F(t)&= \Big[B
							- \left(\coDifS(t,M(t))\coDifO^{\top}(t,M(t)) - \qcovm(t)\nabla_yh^{\top}\right)(\coDifO\coDifO^{\top})^{-1}H\Big]^{\top}\\
							&~~~~~\times \qcovm^{-1}(t)\times \Big[B
							- \left(\coDifS(t,M(t))\coDifO^{\top}(t,M(t)) - \qcovm(t)\nabla_yh^{\top}\right)(\coDifO\coDifO^{\top})^{-1}H\Big],
						\end{align*}
						with 
						\begin{align*}
							B&:= f(t,\sig(t)) - f(t,M(t)) -\nabla_yf(t,M(t))(\sig(t)-M(t)), \\[0.5ex]
							H&:= h(t,\sig(t))-h(t,M(t)) -\nabla_yh(t,M(t))(\sig(t) - M(t)).
						\end{align*}
						Thus, since $f$ and $h$ are almost linear, there exists a family  of positive constants  $\overline{\mu}=(\overline{\mu}^\eps)_{\eps>0}$  converging to $0$ as $\eps \rightarrow 0$ , such that  $ |B|,|H| \leq \overline{\mu} |\sig(t)-M(t)|$. 						
						Moreover,  we have that,
						\begin{align*}
							F^{\top}(t)\qcovm^{-1}(t)F(t)=&~ B^{\top}\qcovm^{-1}B 
							\\
							&- H^{\top}(\coDifO\coDifO^{\top})^{-1}\Big(\coDifS(t,M(t))\coDifO^{\top}
							+ \qcovm(t)\nabla_yh(t,M(t))\Big)^{\top}(\coDifO\coDifO^{\top})^{-1}\qcovm^{-1}B\\ 
							&+ H^{\top}(\coDifO\coDifO^{\top})^{-1}\Big(\coDifS(t,M(t))\coDifO^{\top} + \qcovm(t)\nabla_yh(t,M(t))\Big)^{\top}(\coDifO\coDifO^{\top})^{-1}\qcovm^{-1}\\
							&\times \Big(\coDifS(t,M(t))\coDifO^{\top} + \qcovm(t)\nabla_yh(t,M(t))\Big)(\coDifO\coDifO^{\top})^{-1}H(t)\\
							&- B^{\top}\qcovm^{-1}\Big(\coDifS(t,M(t))\coDifO^{\top} + \qcovm(t)\nabla_yh(t,M(t))\Big)(\coDifO\coDifO^{\top})^{-1}H(t)				
						\end{align*}						
						The expression of $ F^{\top}(t)\qcovm^{-1}(t)F(t)$ involves  matrices  of the form $\qcovm(\coDifO\coDifO^{\top})^{-1} \qcovm^{-1}$  and $\qcovm^{-1}(\coDifO\coDifO^{\top})^{-1} \qcovm$. Using the fact  that $Q$ and $Q^{-1}$ are symmetric and positive definite, one  can show that these two matrices have the same eigenvalues as the matrix $(\coDifO\coDifO^{\top})^{-1}$.  Since $(\coDifO\coDifO^{\top})^{-1}$ is bounded, these matrices are also bounded.			
						Thus, using our assumptions   that  $\coDifS$, $\coDifO$  and  $(\coDifO\coDifO^{\top})^{-1}$ are  bounded, there exists   a family of positive constants  $\mu =(\mu^{\varepsilon})_{\eps>0}$  depending on  $ \overline{\mu}$ such that    
						\begin{align*}
							F^{\top}(t)\qcovm^{-1}(t)F(t)
							\leq &~\mu^{\varepsilon}(1+|\qcovm(t)| + |\qcovm^{-1}(t)|)|\sig(t)-M(t)|^{2}.
						\end{align*}
						In addition,   $\qcovm(t) + \qcovm^{-1}(t)\leqm C k(t) I$ implies that   $|\qcovm(t)| + |\qcovm^{-1}(t)|\leq 2 C|k(t)|$, therefore we can rewrite the last inequality as 					
						\begin{equation} \label{ineq1}
							F^{\top}(t)\qcovm^{-1}(t)F(t)\leq 2 \mu^{\varepsilon}Ck(t)(\sig(t)-M(t))^{\top}I(\sig(t)-M(t)). 
						\end{equation}			
						Then, from   $\qcovm(t) \leqm Ck(t) I$, we obtain $Ck(t)\qcovm^{-1}\geqm I$ and the  inequality \eqref{ineq1}  can  take  the following  form (for a new constant C), 
					\begin{equation}
						F^{\top}(t)\qcovm^{-1}(t)F(t)\leq \mu^{\varepsilon}Ck^{2}(t)(\sig(t)-M(t))^{\top}\qcovm^{-1}(t)(\sig(t)-M(t)). 
					\end{equation}
					The condition \eqref{condSub} is satisfied for some $\mu^{\varepsilon}\to 0$, which completes the proof of condition \eqref{cond1lem132}. 
					\paragraph{ Verfication of condition \eqref{cond2lem132}}
					This  can be deduced from the assumption \eqref{cond2lem222} as follows.
					We have that		
					$\qcovm(t) + \qcovm^{-1}(t) \leqm  Ck(t)I $.  This implies that  ~$D^{\top}(t)(\qcovm(t) + \qcovm^{-1}(t))D(t) \leqm D^{\top}(t) Ck(t)I D(t)$~ and thus 
					\begin{align}						
						~~ D^{\top}(t)\qcovm(t)D(t) + 	D^{\top}(t)\qcovm^{-1}(t)D(t)& \leqm  Ck(t)\vert D(t)\vert^{2} .
					\end{align}
					Since $\qcovm(t)$  and $\qcovm^{-1}(t)$ are  positive definite, it holds  $	D^{\top}(t)\qcovm(t)D(t)\geqm 0$  and also $	D^{\top}(t)\qcovm^{-1}(t)D(t)\geqm 0$.  In addition, $\vert D(t)\vert$ is bounded, thus 
					$D^{\top}(t)\qcovm^{-1}(t)D(t)=O(k(t))$  which proves   \eqref{cond2lem132}.
					 \paragraph{Final conclusion}
						In view of Definition \ref{Kk_stable}one can  see   that if $A$ is $(\qcovm,k)$-stable then  it is also    $(\eps \qcovm,k)$-stable for any $\eps>0$. Thus, we can apply  Lemma \ref{Lemma_1.3.2} with $\K=\eps\qcovm$ (instead of $\K=\qcovm$) and deduce that $\x^\top \mathcal{K}^{-1} \x =(\sig(t)-M(t))^\top (\eps \qcovm)^{-1} (\sig(t)-M(t))$ is bounded in $L^2$, hence    $\eps^{-1/2}|\qcovm^{-1/2}(t)(\sig(t)-M(t))|^2 \leq C$, for some $C>0$. This implies that  $\qcovm^{-1/2}(t)(\sig(t)-M(t))$ is of order $\sqrt{\varepsilon}$.
					            
				\end{proof}

			\begin{proof}   (of Theorem \ref{CvgTH})\\
				Since we  have  assumed  (A\ref{cond4}) that $\af$   and $(\coDifO\coDifO^\top)^{-1}$ are uniformly  elliptic   and   that  $h$  is  strongly injective,  we  have 				
				\begin{equation}
					\qcovm(t) \nabla_yh^\top (\coDifO\coDifO^\top)^{-1} \nabla_yh \qcovm(t) +\af(t, M(t))\geqm c(\qcovm^2(t)+ I) \geqm c\left(\qcovm(t)+\qcovm^{-1}(t)\right)\qcovm(t).
				\end{equation}			
				If  there  exists  a  family of  deterministic  positive  functions $p(t)$  such   that  $p^{-1}(t)\qcovm(t)$  is  uniformly  bounded  and  elliptic,  then  the   conditions  of  Lemma \ref{Lemma_2.2.2}  will be  satisfied with  $k(t)$   proportional  to $p(t)+p^{-1}(t)$. Namely,  if  we assume that   $p^{-1}(t)\qcovm(t)$  is  uniformly  bounded  and  elliptic, it  implies  that there  exist  two  positive  real constants $b_1$  and $b_2$ satisfying 			
				\begin{equation}
					b_1 I \leqm p^{-1}(t)\qcovm(t) \leqm  b_2 I.
				\end{equation}			
				From  the  first  part  of  this inequality we deduce that  $b_1   p(t) I \leqm \qcovm(t)$   and  from  the second  part we  get   $~\dfrac{p^{-1}(t)}{ b_2} I \leqm \qcovm(t)^{-1}$.  		
				Combining  these two inequalities, we  get \begin{equation}
					\qcovm(t)+  \qcovm(t)^{-1} \geqm  \left(b_1   p(t)+ \dfrac{p^{-1}(t)}{ b_2} \right) I \geqm b \left(  p(t)+ p^{-1}(t) \right) I,
				\end{equation} 
				for  some constant $b$.  	Thus, we  only  have  to  prove the  existence  of   such   a  family $p(t)$. 
				
				Let  $p(0)$  be   the  trace of  $\qcovm(0)$, as a  reminder  the   dynamics  of $\qcovm$  is  given by, see \eqref{EKF2},		 
				\begin{align*} 
						&\dot{\qcovm}(t) =- \qcovm(t)\Big[ \nabla_y h^\top(\coDifO\coDifO^\top)^{-1}  \nabla_yh\Big](t,M(t)) \qcovm(t)\\
						&\hspace{1em} +\Big[ \nabla_y f - \coDifS \coDifO^\top(\coDifO\coDifO^\top)^{-1} \nabla_y h\Big](t,M(t))\qcovm(t) + \qcovm(t)\Big[ \nabla_y f - \coDifS \coDifO^\top(\coDifO\coDifO^\top)^{-1}  \nabla_y h\Big]^{\top}(t,M(t))\\
						&\hspace{1em}	 +\af(t,M(t)).
					\end{align*}				
				Then, applying  the  trace  to   this  dynamic,  we  obtain 			
				\begin{align}
					\trace (\dot{\qcovm}(t))=&~\trace\left(\Big[ \nabla_y f - \coDifS \coDifO^\top(\coDifO\coDifO^\top)^{-1} \nabla_y h\Big]\qcovm(t)\right) + \trace\left(\qcovm(t)\Big[ \nabla_y f - \coDifS \coDifO^\top(\coDifO\coDifO^\top)^{-1}  \nabla_y h\Big]^{\top}\right)\nonumber\\
					&	- \trace\left( \qcovm(t)\Big[ \nabla_y h^\top(\coDifO\coDifO^\top)^{-1}  \nabla_yh\Big] \qcovm(t)\right) + \trace(\af(t,M(t))).\nonumber
				\end{align}
				Since $~\nabla_y f$, $\nabla_y h$,    $(\coDifO\coDifO^\top)^{-1}$  and $\coDifS$  are  bounded, and  using  the trace inequality \eqref{trace1},  we  have 
				\begin{align}
						\trace\big(\qcovm(t)\big[ \nabla_y f - \coDifS \coDifO^\top(\coDifO\coDifO^\top)^{-1}  \nabla_y h\big]^{\top}(t,M(t))\big)\hspace*{14em}\\
						\leq  \big\vert \trace\big(\qcovm(t)\big[ \nabla_y f - \coDifS \coDifO^\top(\coDifO\coDifO^\top)^{-1}  \nabla_y h\big]^{\top}(t,M(t))\big) \big\vert \hspace*{0.3em}\\ 
						\leq  \big\vert  \big[ \nabla_y f- \coDifS\coDifO^\top (\coDifO\coDifO^\top)^{-1} \nabla_y h\big]^\top(t, M(t))\big \vert \trace\big( \qcovm(t) \big). 
				\end{align}                    
				In   addition,  the  ellipticity  of  $(\coDifO\coDifO^*)^{-1}$     implies that 
				\begin{equation}
					\trace\left( \qcovm(t) \left( \nabla_y h^\top (\coDifO\coDifO^\top)^{-1} \nabla_y h\right) (t, M(t)) \qcovm(t) \right) \geq
					\trace\left( C \qcovm^{2} (t) \right) \geq \dfrac{C}{n}  \trace\left(  \qcovm(t)  \right)^{2},
				\end{equation}
				with  the  last inequality  coming   from the  property \eqref{trace2}.  
				Thus,  we  can   write   
				\begin{equation}
					-\trace\left( \qcovm(t) \left( \nabla_y h^\top (\coDifO\coDifO^\top)^{-1} \nabla_y h\right) (t, M(t)) \qcovm(t) \right) \leq  \dfrac{C}{n}  \trace\left(  \qcovm(t)  \right)^{2}.
				\end{equation}
				Finally, $\af(t, M(t))$  being  bounded,  we can   find three positive  constants $C_1,~C_2, ~C_3$   such  that 
				\begin{equation}
					\trace(  \dot{\qcovm}(t)) \leq -C_1 \trace\left(  \qcovm(t)  \right)^{2} + C_2 \trace\left(  \qcovm(t)  \right)+ C_3.
				\end{equation} 
				Denoting  $\trace\left(  \qcovm(t)  \right)$ by $p(t)$, the  last inequality  reads
				\begin{equation}\label{eqdot1}
					\dot{p}(t) \leq -C_1 p^2(t) + C_2 p(t) +  C_3.
				\end{equation}
				Then,  since  $\qcovm(t)$  is    bounded,  there exists $C_4$  such  that $C_2 p(t) +  C_3 \leq C_4$.
				Therefore, 
				\begin{equation} \label{eqdot2}
					\dot{p}(t) \leq -C_1 p^2(t) + C_4.
				\end{equation} 
				Then,  the  solution  of \eqref{eqdot1} with equality sign  will   be  less than   the  solution  of  \eqref{eqdot2}  with   equality sign   and  both starting at the  same   initial  condition $p(0)$.
				
				Similarly, if    $\overline{p}(t)$ denotes   the  trace  of   $\qcovm^{-1}(t)$,      writing   the equation  of  the  dynamic   of $\qcovm^{-1}$,  and relying on the assumptions that  $\nabla_y h$
				is   bounded, $\af$  and $(\coDifO\coDifO^\top)^{-1}$  are   elliptic,   we  can  find two  constants  $C'_1 $  and  $C'_2$  such  that  the trace  of  $\qcovm^{-1}(t)$ is  less  than  the   solution  of 			
				\begin{equation} \label{eqdot3}
					\dot{\overline{p}}(t) = -C'_1 \overline{p}^2(t) + C'_2.
				\end{equation} 			
				To  obtain  equation \eqref{eqdot3}, we first derive the dynamics of $\qcovm^{-1}$  by   taking   advantage  of  the fact that 
				$~\qcovm(t)\qcovm^{-1}(t)= I $. This implies that          
				$\dot{\qcovm}(t)\qcovm^{-1}(t)+ \qcovm(t) \dot{\qcovm}^{-1}(t)= 0$   and  thus  $\dot {\qcovm}^{-1}(t)=- \left(\qcovm^{-1} \dot{\qcovm}\qcovm^{-1}\right)(t)$.
				
				Hence, substituting $\dot{\qcovm}(t)$  it   yields  			
				\begin{align*} 
					\dot{\qcovm}^{-1}(t)=&-\qcovm^{-1}(t)\af(t,M(t))\qcovm^{-1}(t)					
					-\qcovm^{-1}(t)\Big[ \nabla_y f - \coDifS \coDifO^\top(\coDifO\coDifO^\top)^{-1} \nabla_y h\Big](t,M(t)) \\&
					-\Big[ \nabla_y f - \coDifS \coDifO^\top(\coDifO\coDifO^\top)^{-1}  \nabla_y h\Big]^{\top}(t,M(t))\qcovm^{-1}(t)
					+\Big[ \nabla_y h^\top(\coDifO\coDifO^\top)^{-1}  \nabla_yh\Big](t,M(t))  .
				\end{align*}
				Applying  the  trace  and  using  the  same  arguments  as  above, \eqref{eqdot3} holds.
				
				Now,  we  have $ \trace(\qcovm(t)) \leq p(t)$  and $ \trace(\qcovm^{-1}(t)) \leq \overline{p}(t)$.  This  implies  that \begin{equation}
					\overline{p}^{-1}(t) \leq \trace(\qcovm(t)) \leq p(t),
				\end{equation}
				and  then 
				\begin{equation}
					\overline{p}^{-1}(t)I \leqm \qcovm(t) \leqm p(t) I.
				\end{equation} 
				So    we  only  have  to  prove  that  $ \overline{p}^{-1}(t) \geq C p(t)$  or  equivalently  that $p(t)\overline{p}(t)$   is  bounded. We  have 			
				\begin{align*}
					\dfrac{d}{dt}\left( p(t)\overline{p}(t )\right)~=& ~\dot{p}(t)\overline{p}(t )+ p(t)\dot{\overline{p}}(t )\\
					~=& ~(-C_1 p^2(t) + C_2) \overline{p}(t)+ p(t) (-C'_1 \overline{p}^2(t) + C'_2 )\\
					~=& ~ (-C_1 p(t) -C'_1 \overline{p}(t)) p(t)\overline{p}(t) + C_2 \overline{p}(t)+ C'_2 p(t)\\
					~\leq &~-C_3( p(t) + \overline{p}(t)) p(t)\overline{p}(t) + C_4( \overline{p}(t)+ p(t))\\
					~\leq &~C_3( p(t) + \overline{p}(t)) \Big(\dfrac{C_4}{C_3}-p(t)\overline{p}(t)\Big)\\
					~\leq &~C_3( p(t) + \overline{p}(t)) (C_5-p(t)\overline{p}(t)),
				\end{align*}
				for  some positive constants $C_3, C_4, C_5$.  Hence,  the derivative of $p(t)\overline{p}(t)$  is  negative as  soon  as $p(t)\overline{p}(t)$  is  greater  than  $C_5$. Moreover,   from  the  assumption  (A\ref{cond4}) on the eigenvalues  of $\qcovm(0)$,  $p(0)\overline{p}(0)$    is  bounded, thus $p(t)\overline{p}(t)$   is  bounded as well. 
				Therefore, the  conclusion  of  Lemma \ref{Lemma_2.2.2}  holds.				
			\end{proof}

			\section{Impact of the initial   error on the EKF estimation error  }
			To implement the EKF procedure, we must assume that the initial value of the hidden state $Y(0)$ is normally distributed with mean $M(0)$ and covariance matrix $\covm(0)$. Therefore, initial guesses for $M(0)$ and $\covm(0)$ are required. These initial guesses generate an initial error, which could impact the convergence of the extended Kalman filter to the true signal. It is necessary to analyze how the initial error evolves over time. The theorem below demonstrates that under certain technical conditions, the initial error diminishes exponentially fast.  
			\begin{theorem}  \label{theo2}
				Let $(M(t),\covm(t))_{t \geq 0}$  be  an  extended  Kalman  filter for the filtering problem \eqref{FiltPb},  with the  gain $G$ defined as  \begin{equation}
					G(t) = \left[   g (t) \ell^\top (t, M(t)) + \qcovm(t) \nabla_y h^\top (t, M(t)) \right] (\ell\ell^\top)^{-1}(t, M(t)),
				\end{equation}
				 where $\qcovm=\frac{1}{\varepsilon} \covm$.
				If   the  following  assumptions are fulfilled,
				
					\begin{enumerate}[wide, labelwidth=!, labelindent=0pt]			\renewcommand{\labelenumi}{(B\theenumi)} 	
						\item\label{condB1} $\sigma (t, Y(t))$  and $g(t, Y(t))$  are  bounded in $L^{\infty -},$
						\item \label{condB2}the functions $f$  and $h$ are $\mathcal{C}^1$  with  bounded derivatives,
						\item 
						\label{condB3}\hspace*{-2mm}\begin{tabular}[t]{l}						
							the  observable  process  $G(t)$   is  such  that  for  any $\mathbb{F}^Z$-adapted process $\xi$,  the  process \\[0.5ex]
							\hspace*{5em}$
							A(t) =\nabla_yf (t, \xi(t))- G(t) \nabla_y h  (t, \xi(t)) 
							$\\[0.5ex]
							is  exponentially stable. More precisely,  it  satisfies  \eqref{expstab} for  some  $c >0$.
						\end{tabular}
					\end{enumerate}	
					Then there exists a constant $C_q>0$ such that for all $c_0 < c$ it holds					
					\begin{equation}
						\vert \sig(t)-M(t) \vert_q \leq C_q  \sqrt{\varepsilon} + C_q  \vert \sig(0)-M(0) \vert_q e^{ - c_0 t}.
					\end{equation}				
				\end{theorem}			
				The conclusion of Theorem \ref{theo2}  shows that for large $t$, the first  term $C_q  \sqrt{\varepsilon}$ is the dominating part of the upper bound  on the right-hand side.  This implies that for $t$ large enough   the filtering error is uniformly bounded w.r.t. time.
				The proof of Theorem \ref{theo2} involves the use of the following lemma, which is proven in \cite{picard1991efficiency}.
				
				\begin{lemma} \label{lem2}
					Consider  a family $A(t)$ of    locally  bounded processes with  values in the class of square  matrices of  order $n$,
					
						\begin{enumerate}
							\item Suppose  that there  exist  a uniformly  bounded   and  elliptic  family $\K$,  and  a  constant number $k> 0$  such  that  $A$  is  $(\K, k)$-stable,  then $A$  is exponentially  stable and the  estimate \eqref{expstab} is  satisfied  with  $c=k/2$.
							\item Conversely,  if $A$ is  uniformly  bounded   and  exponentially   stable (so  that   estimate \eqref{expstab}  holds  for some $c> 0$)  then  for  any $k< 2c $,  there exists a  family of  uniformly bounded and  elliptic  processes $\K$ that   are  adapted  to  the  filtration  generated by $A$   and  are  such  that  $A$  is  $(\K, k)$-stable. More  generally, if  $q_0$ is a  family of  uniformly  positive numbers,  there  exists a family  of  uniformly  elliptic  processes $\K$   such  that $\K(0)= q_0 I,$   $A$  is  $(\K, k)$-stable  and for $t>0$ 
							\begin{equation}
								\trace(\K(t)) \leq C \left(1 + q_0 e^{ (-(2c -k)t) } \right).
							\end{equation}
						\end{enumerate}
					\end{lemma}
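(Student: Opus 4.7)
To establish the forward direction, I propose a quadratic Lyapunov argument. Let $v(t)=\zeta(t)v_0$ be an arbitrary solution of $\dot v = A(t)v$, and define $W(t)=v(t)^\top \K^{-1}(t)v(t)$. Differentiating $\K(t)\K^{-1}(t)=I$ gives $\dot{\K^{-1}}=-\K^{-1}\dot{\K}\K^{-1}$, and the $(\K,k)$-stability inequality \eqref{QKstab} with constant $k$ transfers, after sandwiching by $\K^{-1}$, to $-\K^{-1}\dot{\K}\K^{-1}\leqm -\K^{-1}A - A^\top\K^{-1} - k\K^{-1}$. Combining this with the product-rule identity $\frac{d}{dt}(v^\top\K^{-1}v) = v^\top A^\top\K^{-1}v + v^\top\K^{-1}Av + v^\top\dot{\K^{-1}}v$, the cross terms cancel and one obtains $\dot W(t)\leq -kW(t)$, hence $W(t)\leq W(s)e^{-k(t-s)}$ for $s\leq t$. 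Uniform boundedness and ellipticity of $\K$ yield constants $0<\alpha\leq\beta$ with $\alpha|v|^2 \leq W(t)\leq\beta|v|^2$, so $|v(t)|\leq (\beta/\alpha)^{1/2}|v(s)|e^{-(k/2)(t-s)}$. Applied columnwise (equivalently, in Frobenius norm) to $\zeta$, this produces estimate \eqref{expstab} with $c=k/2$.

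For the converse, I would construct $\K$ explicitly as the solution of a forward Lyapunov equation, namely
\begin{equation}
\K(t) = q_0\,e^{kt}\,\Phi(t,0)\Phi(t,0)^\top + \int_0^t e^{k(t-u)}\,\Phi(t,u)\Phi(t,u)^\top\,du,
\end{equation}
where $\Phi(t,u)=\zeta(t)\zeta(u)^{-1}$ is the transition matrix associated with $A$. A direct differentiation using $\partial_t\Phi(t,u)=A(t)\Phi(t,u)$ and the Leibniz rule shows $\dot{\K}=A\K+\K A^\top+k\K+I$, which in particular delivers $\dot{\K}\geqm A\K+\K A^\top+k\K$, i.e.\ the $(\K,k)$-stability. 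Also $\K(0)=q_0 I$, and since $\Phi(t,u)$ depends only on the restriction of $A$ to $[u,t]\subset[0,t]$, the process $\K$ is adapted to the filtration generated by $A$.

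The main technical obstacle is verifying that this $\K(t)$ is simultaneously uniformly bounded and uniformly elliptic. The upper bound is straightforward: the exponential stability hypothesis gives $|\Phi(t,u)|\leq Ce^{-c(t-u)}$ and, combined with $k<2c$, yields
\begin{equation}
|\K(t)| \leq q_0 C^2 e^{-(2c-k)t} + \frac{C^2}{2c-k}\bigl(1-e^{-(2c-k)t}\bigr),
\end{equation}
which also produces the trace bound $\trace(\K(t))\leq C'(1+q_0 e^{-(2c-k)t})$ stated in the lemma. The uniform lower bound is where uniform boundedness of $A$, say $|A(t)|\leq L$, becomes essential. By Gr\"onwall applied to $\Phi(u,t)$, one has $|\Phi(u,t)|\leq e^{L(t-u)}$, so $\sigma_{\min}(\Phi(t,u))=|\Phi(u,t)|^{-1}\geq e^{-L(t-u)}$. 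Hence for $t\geq 1$ the contribution of the integral over $[t-1,t]$ dominates $e^{-2L}I$, and for $t\in[0,1]$ the first term $q_0 e^{kt}\Phi(t,0)\Phi(t,0)^\top$ provides the required positivity provided $q_0$ is uniformly positive. This yields uniform ellipticity on $[0,\infty)$ and completes the construction.
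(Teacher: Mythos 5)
Your proof is correct and follows essentially the route of the proof the paper relies on: the paper itself does not prove Lemma \ref{lem2} but defers to \citet{picard1991efficiency}, where part 1 is obtained from the same quadratic Lyapunov function $v^\top(t)\K^{-1}(t)v(t)$ (the sandwiched inequality $\dot{W}\le -kW$ plus uniform equivalence of $v^\top\K^{-1}v$ with $|v|^2$ giving $c=k/2$), and part 2 from the same resolvent-based solution $\K(t)=q_0e^{kt}\Phi(t,0)\Phi(t,0)^\top+\int_0^t e^{k(t-u)}\Phi(t,u)\Phi(t,u)^\top\,du$ of the forced Lyapunov equation $\dot{\K}=A\K+\K A^\top+k\K+I$, with adaptedness, the trace bound, and uniform ellipticity via $|\Phi(t,u)^{-1}|\le e^{L(t-u)}$ extracted exactly as you do. The only point worth flagging is that your converse invokes the transition-matrix bound $|\Phi(t,u)|\le Ce^{-c(t-u)}$, which is the intended (and Picard's) meaning of exponential stability but is formally stronger than the paper's literal estimate \eqref{expstab}, which only compares $|\zeta(t)|$ with $|\zeta(s)|$ and, as written, does not by itself control $|\zeta(t)\zeta(s)^{-1}|$.
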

					
					\begin{proof} (of Theorem \ref{theo2})
					Let $(M(t),\covm(t))_{t \geq 0}$   be  the extended   Kalman  filter of  the  model with gain $G(t)$   defined  in  the  theorem  and initial guess $M(0)$.			
						Since  $f$  and $h$ are $C^1$ with bounded derivatives  by  Assumption (B\ref{condB2}),  there exists a  $\mathbb{F}$-adapted process $\xi$ such  that 
						\begin{align}
							f (t,\sig(t))-f (t,M(t))& = \nabla_yf (t,\xi(t)) \left(\sig(t)-M(t)\right), \quad \text{and}\\						
							h (t,\sig(t))-h (t,M(t))&= \nabla_yh (t,\xi(t)) \left(\sig(t)-M(t)\right).
						\end{align}						
						Thus,  defining $\x(t) := \sig(t)-M(t) $, it follows that 						
						\begin{align*}
							f (t,\sig(t))-f(t,M(t))& - G(t) \left( h(t,\sig (t))-h (t,M(t))\right)\\
							&= \left( \nabla_yf (t, \xi(t))- G(t)  \nabla_yh (t, \xi(t)) \right) \x(t)
							= A(t)\x(t),
						\end{align*} 
						with  $A(t) :=  \nabla_yf (t, \xi(t))- G(t)  \nabla_yh (t, \xi(t)).$\\						
						Defining  $\widetilde{W}(t) := \left( W^{(1)}(t), W^{(2)}(t) \right)^\top$,  
						we have 
						\begin{align*}
						d\x(t)=&d\left(\sig(t)-M(t) \right)\\
							=& \left[ f (t,\sig(t))-f (t,M(t))- G(t)\left( h(t,\sig(t))-h (t,M(t))\right) 
							\right]dt\\
							&+\sqrt{\eps} \left[ \sigma (t,\sig(t)) d W^{(1)}(t) + \left(\coDifS (t,\sig(t)) - G(t) \coDifO (t,\sig(t)) \right)  dW^{(2)}(t) \right] \\
							=&\left( \nabla_yf (t, \xi_t)- G(t) \nabla_yh (t, \xi(t)) \right) \x(t)dt \\
							&+\sqrt{\eps}  \left( \sigma (t,\sig(t)) ,~ \coDifS (t,\sig(t)) - G(t) \coDifO (t,\sig(t)) \right)  d\widetilde{W}(t) \\
							=&A(t)\x(t) dt + \sqrt{\eps}D (t) d\widetilde{W}(t),
						\end{align*}						
						with $D(t):=\left( \sigma (t,\sig(t)) ,~ \coDifS (t,\sig(t)) - G(t) \coDifO (t,\sig(t)) \right).$
											
						Since $A$  is  exponentially  stable  and  bounded by  Assumption (B\ref{condB3}), Lemma \ref{lem2}
						asserts  that  for any  $k< 2c $,  there  exists a family of  processes $ \K$  that  are uniformly   bounded, elliptic  and  adapted  such  that  $A$ is  $(\K, k)$    stable,  and  for  some  family  of   uniformly  positive  numbers $q_0= \vert \sig(0)-M(0)\vert_q^2$, $ \K(t)$  satisfy 
						\begin{equation}
							\trace(\K(t)) \leq C \big(1 + q_0 e^{-(2c -k)t } \big).
						\end{equation}
						We deduce also  that  $A$ is  $(\eps \K, k)$  stable.
						We are  then  in  the  setting of Lemma \ref{Lemma_1.3.2} with $F(t)= 0$. Thus  for  $\a=0$			
						and because $\sigma, \coDifS$  and $\coDifO$  are  bounded  by  Assumption (B\ref{condB1}),  for  any  $q \geq 1$  and  $\eps$   small  enough,   if $\x^\top(0) (\eps\K(0))^{-1} \x(0)$  is   bounded  in   $L^q$,  then  the  process   $\x^\top(t) (\eps \K(t))^{-1} \x(t)$  is  bounded  in  $L^q$.  										
						In  addition,  						
						$\eps \K(t) \leqm \trace(\eps  \K(t)) I$  \text{implies}   $(\trace(\eps  \K(t)))^{-1} I \leqm (\eps \K(t) )^{-1}$,
						thus 
						$$\x^\top(t) (\trace(\eps \K(t)))^{-1} \x(t)  \leq \x^\top(t) (\eps \K(t))^{-1} \x(t).$$ 
						It follows that  
						$x^\top(t)(\trace(\eps \K(t)))^{-1} \x(t)$  \text{  is  bounded  in }  $L^q$.
						Finally,   given  the  fact  that\\ $\x^\top(t)(\trace(\eps \K(t)))^{-1} \x(t) = \vert (\trace(\eps \K(t)))^{-1/2} \x(t) \vert^2 $, we deduce  that $(\trace(\eps\K(t)))^{-1/2}\x(t)$  is  bounded  in $L^q$.						
						Therefore, there  exists $C_q$  such  that 
						\begin{equation}
							\vert (\trace(\eps \K(t)))^{-1/2} \x(t) \vert_q \leq C_q,
						\end{equation}
						This  implies, using the inequality $(1+x)^{1/2} \leq 1+ x^{1/2}$ for  $x \geq 0$,  that 
						\begin{align*}
							\vert \x(t)\vert_q ~~\leq  \sqrt{\eps}C_q (\trace(\K(t)))^{1/2}  
							&\leq  \sqrt{\eps} C_q  \left(1 + q_0 e^{ -(2c -k)t}  \right)^{1/2}\\
							&\leq  C_q \sqrt{\eps} +   C_q \sqrt{\eps} \vert \sig(0)-M(0) \vert_q e^{ -(c -k/2)t}\\
							&\leq  C_q \sqrt{\eps} +   C_q  \vert \sig(0)-M(0) \vert_q e^{ -c_0 t},
						\end{align*}					
						with $c_0= c -k/2$  and  $\eps$  small enough.
						
					\end{proof}

					\subsection*{Acknowledgments}			The authors thank  the collaborators within the DFG research project ``CESMO - Contain Epidemics with Stochastic Mixed-Integer Optimal Control'', in particular Gerd Wachsmuth,   Armin Fügenschuh, Markus Friedemann, Jesse Beisegel (BTU Cottbus--Senftenberg),	for insightful discussions and valuable suggestions that improved this paper.

						\smallskip\noindent
						\textbf{Funding~} 	
						The  authors gratefully acknowledge the  support by the Deutsche Forschungsgemeinschaft (DFG), award number 458468407,  and by the  German Academic Exchange Service (DAAD), award number 57417894.

					\appendix
					\section*{Appendix}
					\addcontentsline{toc}{section}{Appendix}
					\renewcommand{\thesection}{\Alph{section}}
					
					\section{The Moore-Penrose pseudoinverse}\label{pseudoinvsec}
					
					Let $n,m$ be two positive integers, and $\mathcal{M}_{n,m}(\mathbb{R})$  the class of $n\times m $  matrices with real entries.
					Every   $A \in \mathcal{M}_{n,m}(\mathbb{R})$
					has a pseudoinverse and, moreover,
					the pseudoinverse, denoted  $A^+ \in \mathcal{M}_{n,m}(\mathbb{R})$, is unique. A purely algebraic characterization of
					$A^+$ is given in the next theorem proved by Penrose in 1956.

					\begin{theorem}	
						Let $A \in \mathcal{M}_{n,m}(\mathbb{R})$. Then $B = A^+$ if and only if	
						\begin{enumerate}		
							\item  $ABA = A$,\\[-4ex]
							\item $BAB = B$,\\[-4ex]
							\item $(AB)^\top = AB$,\\[-4ex]
							\item $(BA)^\top = BA$.		
						\end{enumerate}
						Furthermore, $A^+$ always exists and is unique
					\end{theorem}
					
					\section{General  results  on symmetric  positive definite (semi-definite) matrices }
					We refer  to  the  book  \cite{horn2012matrix}	for  further  details  and proof of  results in  this  section. Let $n,m$ be two positive integers, and $\mathcal{M}_n(\mathbb{R})$  the class of $n\times n $  matrices with real elements:
					
					\begin{defination}
						A  symmetric  matrix  $A \in \mathcal{M}_n(\mathbb{R})$ is 
							\begin{enumerate}
								\item positive definite if  $x^\top Ax>0$  for  all  nonzero $ x \in \mathbb{R}^n$;
								\item positive semi-definite if  $x^\top Ax \geq 0$  for  all  nonzero $ x \in \mathbb{R}^n$.
							\end{enumerate}
						\end{defination}
						
						\begin{proposition}
							Let $A_1, \ldots , A_k \in  \mathcal{M}_n(\mathbb{R})$ be positive semi-definite and let $\alpha_1, \ldots , \alpha_k$
							be nonnegative real numbers. Then  $\sum_{i=1}^k \alpha_i A_i$ is positive semi-definite. If there is a
							$j \in  \lbrace 1, \ldots , k \rbrace$ such that $\alpha_j > 0 $ and $A_ j$ is positive definite, then $\sum_{i=1}^k \alpha_i A_i$ is positive
							definite.
						\end{proposition}	
						
						\begin{proposition}
							Let $A \in  \mathcal{M}_n(\mathbb{R})$ be positive semi-definite (respectively, positive definite).
							Then $\trace A$, $\det A$, and the principal minors of $A$ are all nonnegative (respectively,
							positive). Moreover, $\trace A = 0$ if and only if $A = 0$.
						\end{proposition}	
						
						\begin{proposition}
							Let $A \in  \mathcal{M}_n(\mathbb{R})$ be positive semi-definite and let $ x \in  \mathbb{R}^n$. Then $x^\top Ax =0$ if and only if $Ax = 0$.
							
						\end{proposition}
						
						\begin{proposition}
							A positive semi-definite matrix is positive definite if and only if it is
							nonsingular.
						\end{proposition}
						
						\begin{theorem}
							Let $A \in \mathcal{M}_n(\mathbb{R})$ be symmetric. Then $x^\top Ax > 0$ (respectively,
							$x^\top Ax \geq  0$) for all nonzero $x \in \mathbb{R}^n$ if and only if every eigenvalue of $A$ is positive
							(respectively, nonnegative).
						\end{theorem}
						
						\begin{theorem}
							A symmetric matrix is positive semi-definite if and only if all of its
							eigenvalues are nonnegative. It is positive definite if and only if all of its eigenvalues
							are positive.
						\end{theorem}
						
						\begin{corollary}
							A nonsingular symmetric matrix $A \in \mathcal{M}_n(\mathbb{R})$	 
							is positive definite if and only if $ A ^{-1}$ is positive definite.
						\end{corollary}

						\begin{corollary}
							If $A \in  \mathcal{M}_n(\mathbb{R})$ is positive semi-definite, then so is each $A^k, ~k = 1, 2,\ldots $.	
							
						\end{corollary}

						\begin{theorem}
							Let $A \in  \mathcal{M}_n(\mathbb{R})$ be symmetric.
								\begin{enumerate}
									\item  A is positive semi-definite if and only if there is a $B \in  \mathcal{M}_{m,n}(\mathbb{R}) $ such that $A = B^\top B$.
									\item If $A = B^\top B$  with  $B \in  \mathcal{M}_{m,n} $, and if $x \in  \mathbb{R}^n$, then $Ax = 0$  if and only if $Bx = 0$,
									so $\nullspace A = \nullspace B$ and $\rank A = \rank B$.
									\item  If $A = B^\top B$ with $B \in  \mathcal{M}_{m,n}(\mathbb{R}) $, then $A$ is positive definite if and only if  $B$ has full
									column rank.
									
							\end{enumerate}\end{theorem}
							
							\section{The Loewner partial order}	
							
							\begin{proposition}
								
								If $A \in  \mathcal{M}_{n}(\mathbb{R})$ is symmetric with the smallest and largest eigenvalues
								$\lambda_{min}(A)$  and $\lambda_{max}(A)$, respectively, then  $\lambda_{max}(A)I \geqm  A \geqm \lambda_{min}(A) I$
							\end{proposition}	
							
							\begin{proposition}
								Let $A \in  \mathcal{M}_{n}(\mathbb{R})$ be symmetric, 
									\begin{enumerate}
										\item $I \geqm  A$ if and only if $\lambda_{max} (A) \leq 
										1$; 
										
										\item  $ I \gm  A$, if and only if $\lambda_{max}(A) < 1$.
										
									\end{enumerate}
								\end{proposition}

\begin{defination}
	Let $X \in  \mathcal{M}_{n,m}(\mathbb{R})$. Then $\sigma_1(X) = \lambda_{ max}^{1/2}(X X ^\top) = \lambda_{max}^{1/2}(X^\top X) = \sigma_1(X^\top)$ is the
			largest singular value (the spectral norm) of $X$. We say that $X$ is a contraction if
		$\sigma_ 1(X) \leq  1$; it is a strict contraction if $\sigma_ 1(X) < 1$.
\end{defination}
	\begin{theorem}
								
Let $A,~ B \in  \mathcal{M}_{n}(\mathbb{R})$  be symmetric and let $S \in  M_{n,m}$. Then

\begin{enumerate}
\item  if $A \geqm B$, then $S^\top AS \geqm  S^\top BS$;
\item  if $\rank S = m$, then $A \gm B$ implies $S^\top AS \gm  S^\top BS$
\item if $m = n$ and $S \in M_n$ is nonsingular, then $A \gm B $ if and only if $S^\top AS \gm  S^\top BS$;
									$A \geqm B $  if and only if $S^\top AS \geqm  S^\top BS$;
\item  $I_m \gm S^\top S$ (respectively, $I_n\gm SS^\top$) if and only if $S$ is a strict contraction; $I_m \geqm S^\top S$ (respectively, $I_n\geqm SS^\top$) if and only if $S$ is a  contraction;
									
										\end{enumerate}
									\end{theorem}
									
\begin{theorem}
Let $A, B \in \mathcal{M}_{n}(\mathbb{R})$ be symmetric. Let $\lambda_1(A) \leq \ldots \leq \lambda_n(A)$ and $\lambda_1(B) \leq \ldots \leq \lambda_n(B)$ be the ordered eigenvalues of A and B, respectively.
										
\begin{enumerate}
\item If $A \gm  0$ and $B \gm 0$, then $A \geqm B$ if and only if $ B^{-1} \geqm A^{-1}$.
\item If $A \gm  0$, $B \geqm 0$, and $A \geqm  B$ , then $A^{1/2} \geqm  B^{1/2}$.
	\item If $A \geqm B$, then $ \lambda_i(A)\geq \lambda_i(B)$ for each $i = 1, \ldots , n$.
	\item If $A \geqm  B$, then $\trace A \geq  \trace B$ with equality if and only if $A = B$.
	\item If $A \geqm B \geqm 0$, then $\det A \geq  \det B \geq 0$.
		\end{enumerate}
	\end{theorem}
										
	\begin{theorem}
Let $A \in \mathcal{M}_{n}(\mathbb{R})$ be positive semi-definite and let $B \in  \mathcal{M}_{n}(\mathbb{R})$ be symmetric.
The following four statements are equivalent:
											
\begin{enumerate}
\item \label{prop1} $x^\top Ax \geq  \vert x^\top Bx \vert$  for all $x \in \mathbb{C}^n$.
\item $x^\top Ax + y^\top Ay \geq  2\vert x^\top By\vert$ for all $x, y \in  \mathbb{C}^n$.
\item $H =\left[ \begin{array}{cc} 
A&B\\
	B&A
\end{array} \right]$  is positive semi-definite.
	\item There is a symmetric contraction $X \in \mathcal{M}_{n}(\mathbb{R})$ such that $B = A^{1/2} X A^{1/2}$.

    If A is positive definite, then the following statement is equivalent to \ref{prop1}:
	\item $\rho (A^{-1} B) \leq  1$.
	\end{enumerate} \end{theorem}
											
\section{Some  properties  of  the  trace}
											
We  refer  to  \cite{gabih2020asymptotic}  for  the  proof  of  the  following  lemma.
											
\begin{lemma}
(Properties of symmetric and positive semi-definite matrices)
Let $A, B \in \mathcal{M}_{d}(\mathbb{R})$, $d \in \mathbb{N}$, symmetric and positive semi-definite matrices.
	Then it holds
												
\begin{enumerate}
											
\item 														
\begin{equation}
\lambda_{min}(A) \trace(B) \leq \trace(AB)  \leq  \lambda_{max}(A) \trace(B)
\end{equation}
where $\lambda_{min}(A)$  and $\lambda_{max}(A)$ denote the smallest and largest eigenvalue
	of A, respectively.
												
	\item  \begin{equation} \label{trace1}
	\dfrac{\trace(B)}{ \trace\left( A^{-1} \right)} \leq   \trace(AB) \leq \trace(A)  \trace (B) 
						\end{equation}														
where for the first inequality A is assumed to be positive definite.
														
\item \begin{equation}\label{trace2}
	\trace^2(A)  \geq \trace\left( A^2 \right) \geq  \dfrac{1}{d} \trace^2 (A)
\end{equation}
														
\item  \begin{equation}				\vert A\vert_F = \sqrt{\trace\left( A^2 \right)}  \leq \trace(A)
						\end{equation}
			where $\vert A\vert_F$  denotes the Frobenius norm of A.
					\end{enumerate}
													
							\end{lemma}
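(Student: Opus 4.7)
The plan is to prove all four parts via the spectral decomposition of $A$ (and $B$) combined with elementary facts about sums of nonnegative reals. Since $A$ is symmetric and positive semi-definite, we may write $A = U\Lambda U^\top$ with $U$ orthogonal and $\Lambda = \operatorname{diag}(\lambda_1,\dots,\lambda_d)$ where $\lambda_i = \lambda_i(A) \ge 0$. Using cyclicity of trace, $\trace(AB) = \trace(\Lambda\, U^\top B U) = \sum_{i=1}^d \lambda_i\,(U^\top B U)_{ii}$. Because $B \geqm 0$, the matrix $U^\top B U$ is also positive semi-definite, so its diagonal entries $d_i := (U^\top B U)_{ii}$ are nonnegative and $\sum_i d_i = \trace(U^\top B U) = \trace(B)$. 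The bounds $\lambda_{min}(A)\sum_i d_i \le \sum_i \lambda_i d_i \le \lambda_{max}(A)\sum_i d_i$ give part (1) immediately.

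Part (2) then falls out of part (1) and monotonicity. For the upper bound I note that for PSD $A$ one has $\lambda_{max}(A) \le \sum_i \lambda_i = \trace(A)$, so $\trace(AB) \le \lambda_{max}(A)\trace(B) \le \trace(A)\trace(B)$. For the lower bound (which requires $A \gm 0$ so that $A^{-1}$ exists), the eigenvalues of $A^{-1}$ are $1/\lambda_i(A)$, hence $\lambda_{min}(A) = 1/\lambda_{max}(A^{-1}) \ge 1/\sum_i \lambda_i(A^{-1}) = 1/\trace(A^{-1})$, and part (1) applied with this estimate yields $\trace(AB) \ge \trace(B)/\trace(A^{-1})$.

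Part (3) reduces to an identity in nonnegative scalars: with $\lambda_i := \lambda_i(A) \ge 0$ we have $\trace(A) = \sum_i \lambda_i$ and $\trace(A^2) = \sum_i \lambda_i^2$. The upper bound $\trace(A^2) \le \trace^2(A)$ is the expansion
\begin{equation}
\Bigl(\sum_i \lambda_i\Bigr)^{\!2} = \sum_i \lambda_i^2 + 2\!\!\sum_{i<j}\!\lambda_i\lambda_j \;\ge\; \sum_i \lambda_i^2,
\end{equation}
where nonnegativity of the cross terms is used. The lower bound $\trace(A^2) \ge \frac{1}{d}\trace^2(A)$ is Cauchy--Schwarz: $\bigl(\sum_i \lambda_i\cdot 1\bigr)^2 \le \bigl(\sum_i \lambda_i^2\bigr)\bigl(\sum_i 1\bigr) = d\,\trace(A^2)$. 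Part (4) is then a one-line consequence: for symmetric $A$, $|A|_F^2 = \trace(A^\top A) = \trace(A^2)$, so taking the square root of the upper bound in part (3) gives $|A|_F \le \trace(A)$, and the equality $|A|_F = \sqrt{\trace(A^2)}$ is by definition of the Frobenius norm.

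There is no real obstacle here: every step reduces to a statement about nonnegative scalars $\lambda_i$ after simultaneously diagonalizing or at least orthogonally diagonalizing $A$. The only subtlety worth highlighting is the passage $\lambda_{min}(A) \ge 1/\trace(A^{-1})$ in part (2), which silently relies on $A$ being positive definite (not merely semi-definite); this matches the hypothesis explicitly recorded in the statement of (2).
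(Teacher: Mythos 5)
Your proof is correct, but there is nothing in the paper to compare it against: the paper does not prove this lemma, it simply defers to the cited reference \cite{gabih2020asymptotic} (``We refer to \ldots for the proof of the following lemma''). So your proposal supplies a self-contained argument where the paper offers only a citation. Your route is the standard one and it is sound throughout: orthogonally diagonalizing $A=U\Lambda U^\top$ and writing $\trace(AB)=\sum_i\lambda_i\,(U^\top BU)_{ii}$, with the key observation that the diagonal of the congruence $U^\top BU$ is nonnegative and sums to $\trace(B)$, is exactly the right invariance step for part (1) --- note that $A$ and $B$ need not commute, so they cannot in general be diagonalized in the same basis; your closing phrase ``simultaneously diagonalizing'' is a slip of wording, but your actual argument never relies on it, since only $A$ is diagonalized. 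The scalar reductions for the remaining parts are all correct: $\lambda_{max}(A)\le\trace(A)$ for the upper bound in (2); $\lambda_{min}(A)=1/\lambda_{max}(A^{-1})\ge 1/\trace(A^{-1})$ for the lower bound, where you rightly flag that this requires $A\gm 0$ as in the hypothesis; the expansion of $\bigl(\sum_i\lambda_i\bigr)^2$ with nonnegative cross terms together with Cauchy--Schwarz for (3), where positive semi-definiteness is genuinely used (the upper bound in (3) fails for general symmetric $A$); and $\vert A\vert_F^2=\trace(A^\top A)=\trace(A^2)$ for symmetric $A$ in (4). No gaps.
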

									
					\begin{theorem} 			Let $A, B \in \mathcal{M}_{d}(\mathbb{R})$, $d \in \mathbb{N}$, symmetric and positive semi-definite matrices, then 
					\begin{equation}
					\trace(AB) \leq   \vert \trace(AB)\vert \leq \vert A \vert_2  \trace (B) 
					\end{equation}
					where $\vert A \vert_2 $   denotes the  spectral  norm  or  largest  singular  value  of  $A$.
									\end{theorem}


\begin{thebibliography}{aa}

\bibitem{bain2008fundamentals}
  {Bain, Alan and Crisan, Dan},  {Fundamentals of stochastic filtering},
  {60},
  {2008},
  {Springer Science \& Business Media}.

\bibitem{BrittonPardoux2019}		
	{Tom Britton and Etienne Pardoux}, {Stochastic Epidemic Models with Inference}, 
	    2019,
	 {Springer Internat. Publ.}.

\bibitem{katzur1984asymptotic1} {Katzur, R. and Bobrovsky, BZ and Schuss, Z.},
	{Asymptotic analysis of the optimal filtering problem for one-dimensional diffusions measured in a low noise channel, part {I}},	
	{SIAM Journal on Applied Mathematics},
	{44},
	{3},
	{591--604},
	{1984},
	{SIAM}.

\bibitem{katzur1984asymptotic2} 
	{Katzur, R. and Bobrovsky, BZ and Schuss, Z.},
    {Asymptotic analysis of the optimal filtering problem for one-dimensional diffusions measured in a low noise channel, Part {II}},
	{SIAM Journal on Applied Mathematics},
	{44},
	{6},
	{1176--1191},
	{1984},
	{SIAM}.


\bibitem{joannides1995nonlinear} 
	{Joannides, Marc and LeGland, Francois},{Nonlinear filtering with perfect discrete time observations},
	{Proceedings of 1995 34th IEEE Conference on Decision and Control},
	{4},
	{4012--4017},
{1995},
{IEEE}.
\bibitem{joannides1997nonlinear}  
	{Joannides, Marc and LeGland, Francois}, {Nonlinear filtering with continuous time perfect observations and noninformative quadratic variation},	
	{Proceedings of the 36th IEEE Conference on Decision and Control},
	{2},
	{1645--1650},
	{1997},
	{IEEE}.
\bibitem{takeuchi1981least}
	{Takeuchi, Yoshiki and Akashi, Hajime},{Least-squares state estimation of systems with state-dependent observation noise},
	{IFAC Proceedings Volumes},
	{14},
	{2},
	{557--562},
	{1981},
	{Elsevier}.
\bibitem{takeuchi1985least}
	{Takeuchi, Yoshiki and Akashi, Hajime},{Least-squares state estimation of systems with state-dependent observation noise},
	{Automatica},
	{21},
	{3},
	{303--313},
	{1985},
	{Elsevier}.

\bibitem{mclane1969optimal}
	{McLane, PJ},{Optimal linear filtering for linear systems with state-dependent noise},
	{International Journal of Control},
	{10},
	{1},
	{41--51},
	{1969},
{Taylor \& Francis}.
\bibitem{1099828}
  {McLane, P.}, {Optimal stochastic control of linear systems with state- and control-dependent disturbances},
  {IEEE Transactions on Automatic Control}, 
  {1971},
  {16},
 {6},
 {793-798}.


\bibitem{kutoyants2025extended}
 {Kutoyants, Yury A}, {Extended adaptive {K}alman filter with low noise observations},
  {2025},
 {10},
 {4},
{443-470},
 {10.3934/puqr.2025023},
{Probability, Uncertainty and Quantitative Risk}.


\bibitem{gelb1974applied}
 {Gelb, Arthur},  {Applied optimal estimation},
  {1974},
 {MIT Press}.

\bibitem{LiptserShiryaevVolII2001}
	{Liptser, Robert S. and  Albert N. Shiryaev},{Statistics of Random Processes II: Applications},
	{00041918},
	{Applications of Mathematics Stochastic Modelling and Applied Probability Series},
	{2001},
		{Springer}.

     
\bibitem{horn2012matrix}
{Horn, Roger A and Johnson, Charles R},  {Matrix analysis},
  {2012},
 {Cambridge University Press}.

\bibitem{gabih2020asymptotic}{Gabih, Abdelali and Kondakji, Hakam and Wunderlich, Ralf}, {Asymptotic filter behavior for high-frequency expert opinions in a market with {G}aussian drift},
  {Stochastic Models},
  {36},
  {4},
  {519--547},
  {2020},
 {Taylor \& Francis}.
\bibitem{njiasse2025stoch}
   {Mbouandi Njiasse, Ibrahim and  Ouabo Kamkumo, Florent  and  Wunderlich, Ralf},  {Stochastic Optimal Control of an Epidemic Under Partial Information}, 
      {arxiv:2503.06804},
    {2025}.



\bibitem{mbouandi2025phd}
 {Mbouandi Njiasse, Ibrahim},  {Stochastic models and optimal control of epidemics under partial information},
  {2025},
{https://doi.org/10.26127/BTUOpen-7127},
{BTU Cottbus-Senftenberg}.

\bibitem{ouabo2025phd}
 {Ouabo Kamkumo, Florent},  {Stochastic Epidemic Models with Partial Information and Dark Figure Estimation },
 {2025},
 {BTU Cottbus-Senftenberg},
   {https://doi.org/10.26127/BTUOpen-7197}.

\bibitem{kamkumo2025stochastic}
 {Ouabo Kamkumo, Florent  and Mbouandi Njiasse, Ibrahim  and Wunderlich, Ralf},  {Stochastic Epidemic Models with Partial Information},
 { arXiv:2503.07251},
  {2025}.

\bibitem{kamkumo2025estimating}
{Ouabo Kamkumo, Florent  and Mbouandi Njiasse, Ibrahim  and Wunderlich, Ralf}, {Estimating Unobservable States in Stochastic Epidemic Models with Partial Information},
  {arXiv:2506.00906},
 {2025}.  

\bibitem{pardoux1991filtrage}
Pardoux, {\'E}tienne, 
	{Filtrage non lin{\'e}aire et {\'e}quations aux d{\'e}riv{\'e}es partielles stochastiques associ{\'e}es},	
	{Ecole d'Et{\'e} de Probabilit{\'e}s de Saint-Flour XIX-1989},
	{68--163},
	{1991},
	{Springer}.

\bibitem{picard1991efficiency}
  {Picard, Jean},  {Efficiency of the extended {K}alman filter for nonlinear systems with small noise},
  {SIAM Journal on Applied Mathematics},
  {51},
 {3},
  {843--885},
  {1991},
  {SIAM}.
\bibitem{picard1993estimation}
{Picard, Jean}, {Estimation of the quadratic variation of nearly observed semi-martingales with application to filtering},
	{SIAM Journal on Control and Optimization},
	{31},
	{2},
	{494--517},
	{1993},
	{SIAM}.





\end{thebibliography}

\end{document}